\documentclass[11pt]{article}

\usepackage[a4paper,margin=1in]{geometry}
\usepackage{amsmath,amssymb,amsthm,mathtools}
\usepackage{booktabs}
\usepackage{graphicx}
\usepackage{subcaption}
\usepackage{enumitem}
\usepackage{microtype}
\usepackage{hyperref}
\usepackage{cleveref}

\hypersetup{
  colorlinks=true,
  linkcolor=blue,
  citecolor=blue,
  urlcolor=blue
}

\newtheorem{theorem}{Theorem}[section]
\newtheorem{proposition}[theorem]{Proposition}
\newtheorem{lemma}[theorem]{Lemma}
\newtheorem{corollary}[theorem]{Corollary}
\theoremstyle{definition}
\newtheorem{definition}[theorem]{Definition}
\theoremstyle{remark}
\newtheorem{remark}[theorem]{Remark}

\title{Contour Hankel dynamics and indicator fields for the Riemann $\Xi$-function}
\author{
  Zhiliang Deng\thanks{School of Mathematical Science, University of Electronic Science and Technology of China, China. Email: \texttt{dengzhl@uestc.edu.cn}.}
  \and
  Xiaomei Yang\thanks{School of Mathematics, Southwest Jiaotong University, China. Email: \texttt{yangxiaomath@swjtu.edu.cn}.}
  \and
  Huazhong L\"u\thanks{School of Mathematical Science, University of Electronic Science and Technology of China, China. Email: \texttt{lvhz@uestc.edu.cn}.}
}
\date{}

\begin{document}
\maketitle

\begin{abstract}
We develop a moving-contour Hankel framework for encoding local zero configurations of the Riemann $\Xi$-function. Weighted contour integrals of the logarithmic derivative $\Xi'/\Xi$, expressed in a holomorphic coordinate associated with the contour, are identified with the power moments of a finite atomic measure supported at the coordinate images of the enclosed zeros. This representation yields exact zero-free and rank criteria and, for conjugation-compatible contour-coordinate pairs, an inertia formula: once the matrix order is at least the number of distinct coordinate nodes, the negative index equals the number of distinct nonreal conjugate pairs. Consequently, the Riemann hypothesis admits a local finite-dimensional Hankel-positivity formulation, although establishing this positivity independently of the zero set remains unresolved. As the contour moves, the Hankel matrix evolves by a continuous congruence flow between zero crossings and undergoes finite-rank jumps at crossing events. An isolated zero produces a signed rank-one jump, whereas a nonreal conjugate pair produces a rank-two indefinite event in a real-axis circular scan that meets the pair. Removing the continuous coordinate drift yields a piecewise-constant matrix process from which crossing coordinates, multiplicities, and zero locations can be recovered. Numerical experiments validate the contour quadrature, indicator fields, continuous flow, crossing signatures, and recovery procedure.
\end{abstract}

\section{Introduction}
\label{sec:introduction}

Let
\begin{equation}
\xi(s)
=
\frac12 s(s-1)\pi^{-s/2}
\Gamma\left(\frac{s}{2}\right)\zeta(s),
\qquad
\Xi(t)
=
\xi\left(\frac12+\mathrm i t\right),
\label{eq:xi-Xi-def}
\end{equation}
where $\zeta$ is the Riemann zeta function. The functional equation and Schwarz symmetry imply
\begin{equation}
\Xi(-t)=\Xi(t),
\qquad
\Xi(\overline t)=\overline{\Xi(t)}.
\label{eq:Xi-symmetries}
\end{equation}
The Riemann hypothesis (RH) is equivalent to the assertion that every zero of the entire function $\Xi$ is real. Its logarithmic derivative
\begin{equation}
G(t)
=
\frac{\Xi'(t)}{\Xi(t)}
\label{eq:G-def}
\end{equation}
is meromorphic, with poles at the zeros of $\Xi$ and residues equal to their multiplicities. Contour integrals of $G$ therefore encode local information about the zero configuration through the argument principle and its weighted variants \cite{Edwards1974}.

Weighted logarithmic-derivative integrals have long been used to compute power sums of enclosed zeros and to recover their locations and multiplicities by algebraic moment methods \cite{DelvesLyness1967}. Closely related contour moments arise in numerical eigensolvers, where they extract eigenvalues lying in a prescribed region \cite{SakuraiSugiura2003}, while systems-theoretic interpretations connect block Hankel contour data with realization, rational interpolation, and Loewner constructions \cite{BrennanEmbreeGugercin2023}. In these approaches, the contour is typically fixed and serves as a device for extracting spectral data from a prescribed region.

The present work instead treats the contour as a moving local observation window. For a contour $\Gamma$ and an associated holomorphic coordinate $\phi_\Gamma$, we form weighted contour moments of $G$ and assemble them into a finite Hankel matrix $H_m(\Gamma;\phi_\Gamma)$. The contour determines which zeros are observed, while the coordinate determines how the enclosed zeros are represented. As the contour-coordinate pair moves through the $t$-plane, the resulting matrices form a trajectory that records both the continuous variation of the coordinate representation and the discrete entry or exit of zeros. The underlying contour-moment and Hankel reconstruction mechanism is closely related to our recent work on the recovery and certification of poles of meromorphic functions \cite{ContourCountPoleClusters2026,YangDeng2026PoleCertification}. The essential new feature here is dynamical: the Hankel matrix is treated as a parameter-dependent state whose continuous evolution is interrupted by finite-rank zero-crossing events.

The residue theorem identifies the contour moments with the power moments of a finite atomic measure supported at the coordinate images of the enclosed zeros. This representation yields the static algebraic structure of the method: the Hankel matrix vanishes exactly when the contour interior is zero-free, and its stabilized rank equals the number of distinct coordinate nodes. For a conjugation-compatible contour-coordinate pair, the matrix is real symmetric and admits an exact inertia classification. Once the matrix order is sufficiently large, each real node contributes one positive direction, whereas each nonreal conjugate pair contributes one positive and one negative direction. Consequently, the negative index counts the distinct nonreal conjugate pairs represented by the contour coordinate. This inertia formula gives a local finite-dimensional Hankel-positivity formulation of RH for real-centered circular contours. The statement is an equivalence rather than a proof: the unresolved problem is to derive the required positivity directly from the analytic or arithmetic structure of $\Xi$, without using prior information about its zeros.

The main dynamical results describe how this matrix representation changes when the contour-coordinate pair moves. Between zero crossings, normalized circular contours generate a finite-dimensional Lyapunov-type equation whose solution is a congruence flow; hence rank is preserved, and inertia is preserved whenever the matrices are real symmetric. At an isolated crossing, the atomic measure gains or loses one weighted point mass, producing a signed rank-one jump, while simultaneous crossings produce finite-rank updates. In a real-axis circular scan, a real zero gives a rank-one signed semidefinite event, whereas a nonreal conjugate pair met by the scan gives a rank-two indefinite event. Conjugating by the known continuous flow removes the coordinate drift and converts the trajectory into a piecewise-constant matrix process. Under an isolated-event assumption, its jumps determine the crossing direction, multiplicity, and local coordinate, and for circular contours the zero location follows directly. Multiple simultaneous events can be recovered from the jump moments through a finite Prony-type system, with conditioning governed by the separation of the crossing coordinates.

The matrix trajectory also generates scalar fields on the contour parameter space. We consider a zero-count field, a Hankel-energy field, and, for real symmetric contour matrices, a positive-semidefiniteness-violation field. These quantities describe the zero information contained in the moving testing region and should not be interpreted as pointwise values attached to the contour center. The numerical experiments validate the computational content of this local framework rather than provide numerical evidence for RH. They examine contour-quadrature accuracy and its deterioration near a crossing, illustrate the three indicator fields, verify the benefit of centered normalization, and test the continuous flow and isolated-event recovery using the first positive zero of $\Xi$.

Several other Hankel or positivity constructions have been studied in connection with RH. The de Bruijn--Newman theory deforms the Fourier representation of $\Xi$ by a heat flow \cite{Polymath2019,RodgersTao2018}. Bickel, Pascoe, and Sargent obtain quantitative real-rootedness information from truncated Hankel positivity for genus-one entire functions \cite{BickelPascoeSargent2023}. Suzuki develops positivity-type criteria through a screw function associated with the zeta function \cite{Suzuki2023}, while Matiyasevich studies special Hankel matrices formed from global coefficient sequences \cite{Matiyasevich2007,Matiyasevich2017}. The matrices considered here are different: they are built from local contour moments, and their dynamics are induced geometrically by moving the observation window relative to a fixed zero set.

The paper is organized as follows. Section~\ref{sec:contour_moment} introduces contour-coordinate pairs, moments, and Hankel matrices. Section~\ref{sec:structural-properties} develops their vanishing, rank, inertia, local positivity, and affine covariance properties. Section~\ref{sec:hankel-dynamics} derives the continuous and impulsive Hankel dynamics and the recovery of crossing events. Section~\ref{sec:indicator-functionals} introduces the scalar indicator fields, Section~\ref{sec:numerics} presents the numerical validation, and the final section summarizes the conclusions and open problem.

\section{Contour-coordinate moments and Hankel matrices}
\label{sec:contour_moment}

\subsection{Contour-coordinate pairs}

Let $\Gamma\subset\mathbb C$ be a positively oriented piecewise smooth Jordan contour, and let $\Omega_\Gamma$ denote the bounded domain enclosed by $\Gamma$. We assume that $\Xi$ is holomorphic in a neighborhood of $\overline{\Omega_\Gamma}$ and that
\begin{equation}
\Xi(t)\neq 0, \qquad t\in\Gamma.
\label{eq:no-zero-on-contour}
\end{equation}
Consequently, the logarithmic derivative
$$
G(t)=\frac{\Xi'(t)}{\Xi(t)}
$$
is well defined on $\Gamma$ and holomorphic in a neighborhood of $\Gamma$. Inside $\Omega_\Gamma$, its poles are precisely the zeros of $\Xi$, with residues equal to their multiplicities.

The contour determines which zeros of $\Xi$ are enclosed, but it does not by itself provide a normalized numerical representation of their locations. To encode the enclosed zero configuration, we introduce an auxiliary holomorphic coordinate function.

\begin{definition}[Contour coordinate function]
A contour coordinate function associated with $\Gamma$ is a function $\phi_\Gamma$ that is holomorphic in a neighborhood of $\overline{\Omega_\Gamma}$. The pair
$(\Gamma,\phi_\Gamma)$ is called a contour-coordinate pair.
\end{definition}

The role of $\phi_\Gamma$ is to assign coordinate values to the zeros enclosed by $\Gamma$. Suppose that the distinct zeros of $\Xi$ in $\Omega_\Gamma$ are
$\tau_1,\ldots,\tau_q$
with respective multiplicities $n_1,\ldots,n_q$.
Their associated coordinate values are defined by
\begin{equation}
u_\ell=\phi_\Gamma(\tau_\ell), \qquad \ell=1,\ldots,q.
\label{eq:coordinate-values}
\end{equation}
Thus, the contour determines which zeros are observed, while the coordinate function determines how their locations are represented.

In general, distinct zeros need not have distinct coordinate values. When
$\phi_\Gamma(\tau_\ell)=\phi_\Gamma(\tau_{\ell'})$ for some $\ell\neq\ell'$, the corresponding zeros are indistinguishable at the level of the contour coordinates.
A coordinate function is said to separate the enclosed zeros if the values $u_1,\ldots,u_q$ are pairwise distinct.

A particularly important choice is obtained from circular contours. For $z\in\mathbb C$ and $\rho>0$, let
\begin{equation}
D_\rho(z)=\{t\in\mathbb C:|t-z|<\rho\},\qquad \Gamma_\rho(z)=\partial D_\rho(z),
\label{eq:circle-contour}
\end{equation}
where $\Gamma_\rho(z)$ is given the positive orientation. We associate with this contour the normalized centered coordinate
\begin{equation}
\phi_{z,\rho}(t)=\frac{t-z}{\rho}.
\label{eq:centered-coordinate}
\end{equation}
This affine coordinate maps $D_\rho(z)$ biholomorphically onto the unit disk. In particular, every zero enclosed by $\Gamma_\rho(z)$ satisfies
$$
|\phi_{z,\rho}(\tau_\ell)|<1.
$$
The normalization therefore controls the magnitude of higher-order moments and avoids the numerical imbalance that may arise from taking large unscaled powers of $t$.

\subsection{Contour moments}

The coordinate values of the enclosed zeros are encoded by a sequence of contour moments.
\begin{definition}[Contour moments]
For $k=0,1,2,\ldots$, define
\begin{equation}
\mu_k(\Gamma;\phi_\Gamma)=\frac{1}{2\pi\mathrm i} \int_\Gamma \phi_\Gamma(t)^k G(t)\,\mathrm dt.
\label{eq:general-moments}
\end{equation}
The sequence
$$
\{\mu_k(\Gamma;\phi_\Gamma)\}_{k\geq 0}
$$
is called the contour moment sequence associated with the contour-coordinate pair $(\Gamma,\phi_\Gamma)$.
\end{definition}

The following residue representation shows that the contour moments depend only on the enclosed zeros, their multiplicities, and their coordinate values.

\begin{lemma}[Residue representation]
\label{lem:power-sum}
Under assumption \eqref{eq:no-zero-on-contour},
\begin{equation}
\mu_k(\Gamma; \phi_\Gamma)=
\sum_{\ell=1}^{q}
n_\ell\phi_\Gamma(\tau_\ell)^k,
\qquad
k=0,1,2,\ldots.
\label{eq:power-sum-identity}
\end{equation}
Equivalently,
\begin{equation}
\mu_k(\Gamma;\phi_\Gamma)=\sum_{\ell=1}^{q}n_\ell u_\ell^k.
\label{eq:power-sum-coordinate}
\end{equation}
\end{lemma}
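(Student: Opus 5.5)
The plan is to apply the residue theorem to the integrand $F_k(t)=\phi_\Gamma(t)^k G(t)$. Since $\phi_\Gamma$ and $\Xi$ are holomorphic on a neighborhood $U$ of $\overline{\Omega_\Gamma}$, $F_k$ is meromorphic on $U$. Its only possible poles in $\overline{\Omega_\Gamma}$ are zeros of $\Xi$, and by \eqref{eq:no-zero-on-contour} none of them lie on $\Gamma$. The zeros of the entire, not identically zero function $\Xi$ are isolated, and $\overline{\Omega_\Gamma}$ is compact, so $\Xi$ has only finitely many zeros in $\Omega_\Gamma$. This justifies the finite list $\tau_1,\dots,\tau_q$. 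Because $\Gamma$ is a positively oriented piecewise smooth Jordan curve, the residue theorem gives
\[
\mu_k(\Gamma;\phi_\Gamma)=\sum_{\ell=1}^q \operatorname{Res}_{t=\tau_\ell} F_k(t).
\]
Here each interior point of a Jordan domain has winding number one, and exterior points of $U$ play no role.

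The key step is the local computation of each residue. Near $\tau_\ell$ write $\Xi(t)=(t-\tau_\ell)^{n_\ell}h_\ell(t)$ with $h_\ell$ holomorphic and $h_\ell(\tau_\ell)\neq0$. Then
\[
G(t)=\frac{n_\ell}{t-\tau_\ell}+\frac{h_\ell'(t)}{h_\ell(t)},
\]
where the second term is holomorphic near $\tau_\ell$. Multiplying by the holomorphic factor $\phi_\Gamma(t)^k$ shows that $F_k$ has at most a simple pole at $\tau_\ell$, with residue $n_\ell\phi_\Gamma(\tau_\ell)^k$. The product of $\phi_\Gamma^k$ with $h_\ell'/h_\ell$ contributes nothing. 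For $k=0$ we use the convention $\phi_\Gamma^0\equiv1$, and the formula reduces to the argument principle. If $\phi_\Gamma(\tau_\ell)=0$ and $k\ge1$, the residue is simply $0$, which is consistent with the formula.

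Summing over $\ell$ yields \eqref{eq:power-sum-identity}, and substituting the definition \eqref{eq:coordinate-values} of $u_\ell$ gives \eqref{eq:power-sum-coordinate}. There is no real obstacle. The only point requiring care is the justification that the residue theorem applies on a general piecewise smooth Jordan contour: holomorphy must hold on a neighborhood of the closed region, and there must be only finitely many interior poles. Both follow from the standing assumptions together with the compactness argument above.
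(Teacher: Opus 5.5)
Your proposal is correct and follows the paper's proof: you use the same local factorization $\Xi(t)=(t-\tau_\ell)^{n_\ell}h_\ell(t)$ to get the residue $n_\ell\phi_\Gamma(\tau_\ell)^k$, and then sum the residues via the residue theorem. Your extra remarks (finiteness of the interior zeros by compactness, the winding-number justification on a Jordan contour, and the $k=0$ convention) are valid details that the paper leaves implicit.
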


\begin{proof}
If $\tau_\ell$ is a zero of $\Xi$ of multiplicity $n_\ell$, then locally
$$
\Xi(t)=(t-\tau_\ell)^{n_\ell}h_\ell(t),
$$
where $h_\ell$ is holomorphic and $h_\ell(\tau_\ell)\neq 0$. Hence
$$
G(t)= \frac{\Xi'(t)}{\Xi(t)}=
\frac{n_\ell}{t-\tau_\ell}+\frac{h_\ell'(t)}{h_\ell(t)}.
$$
It follows that
$$
\operatorname{Res}_{t=\tau_\ell}
\bigl(\phi_\Gamma(t)^kG(t)\bigr)=
n_\ell\phi_\Gamma(\tau_\ell)^k.
$$
Summing the residues over all zeros enclosed by $\Gamma$ and applying the residue theorem gives \eqref{eq:power-sum-identity}. Equation \eqref{eq:power-sum-coordinate} then follows from \eqref{eq:coordinate-values}.
\end{proof}

In particular, the zeroth moment is independent of the choice of coordinate function and satisfies
\begin{equation}
\mu_0(\Gamma;\phi_\Gamma)= \sum_{\ell=1}^{q}n_\ell
=N_{\Omega_\Gamma}(\Xi),
\label{eq:count-identity}
\end{equation}
where $N_{\Omega_\Gamma}(\Xi)$ denotes the total number of zeros of $\Xi$ in $\Omega_\Gamma$, counted with multiplicity.

The residue representation admits a natural measure-theoretic interpretation. Define the finite atomic measure
\begin{equation}
\nu_\Gamma=
\sum_{\ell=1}^{q}
n_\ell\delta_{u_\ell},
\label{measure_point}
\end{equation}
where $\delta_{u_\ell}$ denotes the unit point mass at 
$u_\ell=\phi_\Gamma(\tau_\ell)$.
Then
\begin{equation}
\mu_k(\Gamma;\phi_\Gamma)=
\int_{\mathbb C}u^k\,\mathrm d\nu_\Gamma(u),
\qquad
k\geq0.
\label{measure_moment}
\end{equation}
Thus, the contour moments are precisely the power moments of the finite atomic measure $\nu_\Gamma$.

To account explicitly for possible coordinate collisions, let
$\zeta_1,\ldots,\zeta_r$
be the distinct values among $u_1, \ldots, u_q$, and define the aggregated weights
\begin{equation}
\omega_j=
\sum_{\{\ell:\,u_\ell=\zeta_j\}}n_\ell,
\qquad
j=1,\ldots,r.
\label{eq:aggregated-weights}
\end{equation}
Then
\begin{equation}
\nu_\Gamma=
\sum_{j=1}^{r}\omega_j\delta_{\zeta_j},
\qquad
\mu_k(\Gamma;\phi_\Gamma)=
\sum_{j=1}^{r}\omega_j\zeta_j^k.
\label{eq:distinct-support-representation}
\end{equation}
Here $r\leq q$, with $r=q$ precisely when $\phi_\Gamma$ separates the enclosed zeros.

Accordingly, the contour-coordinate pair $(\Gamma,\phi_\Gamma)$ represents the enclosed zero configuration through its coordinate image as a finite atomic measure, while the contour moments record the corresponding power-moment sequence.

\subsection{Contour Hankel matrices}

The contour moment sequence naturally generates a family of Hankel
matrices.
\begin{definition}[Contour Hankel matrix]
For $m\geq1$, define
\begin{equation}
H_m(\Gamma;\phi_\Gamma)=
\bigl(
\mu_{i+j}(\Gamma;\phi_\Gamma)
\bigr)_{i,j=0}^{m-1}.
\label{eq:general-hankel}
\end{equation}
\end{definition}

Thus, $H_m(\Gamma;\phi_\Gamma)$ collects the moments
$\mu_0,\mu_1,\ldots,\mu_{2m-2}$ into an $m\times m$ complex
symmetric matrix. Throughout the paper, the superscript $T$ denotes the ordinary
matrix transpose, rather than the conjugate transpose.

For $u\in\mathbb C$, let
$v_m(u)=(1,u,u^2,\ldots,u^{m-1})^T$.
The power-sum representation of the contour moments in Lemma~\ref{lem:power-sum} gives the following
Vandermonde factorization.

\begin{proposition}[Vandermonde factorization]
\label{prop:vander-factorization}
For every $m\geq1$,
\begin{equation}
H_m(\Gamma;\phi_\Gamma) =
\sum_{\ell=1}^{q}
n_\ell
v_m(u_\ell)v_m(u_\ell)^T.
\label{eq:hankel-sum}
\end{equation}
Equivalently,
\begin{equation}
H_m(\Gamma;\phi_\Gamma)=V_mWV_m^T,
\label{eq:vander-factorization}
\end{equation}
where
$$
V_m=
\begin{bmatrix}
v_m(u_1)&\cdots&v_m(u_q)
\end{bmatrix}
\in\mathbb C^{m\times q},
\qquad
W=
\operatorname{diag}(n_1,\ldots,n_q).
$$

If $\zeta_1,\ldots,\zeta_r$ are the distinct coordinate values among
$u_1,\ldots, u_q$, and if $\omega_j$ denotes the total multiplicity
associated with $\zeta_j$, then
\begin{equation}
H_m(\Gamma;\phi_\Gamma)=\widetilde V_m\widetilde W\widetilde V_m^T,
\label{eq:reduced-vander-factorization}
\end{equation}
where
$$
\widetilde V_m=
\begin{bmatrix}
v_m(\zeta_1)&\cdots&v_m(\zeta_r)
\end{bmatrix},
\qquad
\widetilde W=\operatorname{diag}(\omega_1,\ldots,\omega_r).
$$
\end{proposition}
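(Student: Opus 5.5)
The plan is to compare entries. By Lemma~\ref{lem:power-sum}, the $(i,j)$ entry of $H_m(\Gamma;\phi_\Gamma)$, for $0\le i,j\le m-1$, is
$$
\mu_{i+j}(\Gamma;\phi_\Gamma)=\sum_{\ell=1}^{q} n_\ell u_\ell^{i+j}=\sum_{\ell=1}^{q} n_\ell\, u_\ell^{i}\,u_\ell^{j}.
$$
The $(i,j)$ entry of the rank-one matrix $v_m(u)v_m(u)^T$ is $u^iu^j$, because the $i$-th component of $v_m(u)$ is $u^i$ when rows are indexed from $0$. Summing over $\ell$ with weights $n_\ell$ therefore gives \eqref{eq:hankel-sum} entrywise. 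Only the plain transpose appears, consistent with the paper's convention, so no conjugation enters.

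The factored form \eqref{eq:vander-factorization} comes from the identity $V_mWV_m^T=\sum_{\ell} n_\ell\,(V_m e_\ell)(V_m e_\ell)^T$, which holds because $W$ is diagonal. Here $e_\ell$ is the $\ell$-th standard basis vector of $\mathbb C^q$. Since $V_m e_\ell=v_m(u_\ell)$, this sum is exactly \eqref{eq:hankel-sum}.

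For the reduced form, group the terms of \eqref{eq:hankel-sum} by the level sets $\{\ell: u_\ell=\zeta_j\}$. Within one group all the vectors $v_m(u_\ell)$ equal $v_m(\zeta_j)$, so the group contributes $\omega_j v_m(\zeta_j)v_m(\zeta_j)^T$ by the definition \eqref{eq:aggregated-weights} of $\omega_j$. The same diagonal-factor identity then yields \eqref{eq:reduced-vander-factorization}. Alternatively, one can start directly from the moment representation \eqref{eq:distinct-support-representation} and repeat the entrywise computation.

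None of these steps is a genuine obstacle. The only points needing care are the $0$-based indexing and the use of $T$ as the ordinary transpose, so that the factorization is a complex-symmetric identity and not a Hermitian one.
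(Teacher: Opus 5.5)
Your proposal is correct and follows the paper's own proof: both identify the $(i,j)$ entry $\sum_{\ell} n_\ell u_\ell^{i+j}$ of the right-hand side with $\mu_{i+j}$ via Lemma~\ref{lem:power-sum}, and both obtain the reduced form by grouping the terms that share a coordinate value. Your check that $V_mWV_m^T=\sum_\ell n_\ell (V_m e_\ell)(V_m e_\ell)^T$ just spells out the equivalence of the sum and matrix forms, which the paper leaves implicit.
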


\begin{proof}
The $(i, j)$-entry of the right-hand side of
\eqref{eq:hankel-sum} is
$\sum_{\ell=1}^{q}n_\ell u_\ell^{i+j}$,
which equals
$\mu_{i+j}(\Gamma;\phi_\Gamma)$ by
\eqref{eq:power-sum-coordinate}.
The reduced factorization
\eqref{eq:reduced-vander-factorization} follows by grouping together
all terms in \eqref{eq:hankel-sum} having the same coordinate value.
\end{proof}

The factorization shows that the contour Hankel matrix is completely
determined by the distinct coordinate values of the enclosed zeros and
their aggregated multiplicities. The reduced form
\eqref{eq:reduced-vander-factorization} will be used below to analyze
the rank and null space of the contour Hankel matrices. Since the
factorizations involve the ordinary transpose rather than the
conjugate transpose, these matrices are generally complex symmetric,
but need not be Hermitian or positive semidefinite.

\section{Structural properties of contour Hankel matrices}
\label{sec:structural-properties}

Throughout this section, we use the reduced atomic representation and Vandermonde factorization introduced in \eqref{eq:distinct-support-representation} and \eqref{eq:reduced-vander-factorization}. Thus $\zeta_1,\ldots,\zeta_r$ are the distinct coordinate nodes and $\omega_1,\ldots,\omega_r$ are their positive aggregated multiplicities.

\subsection{Vanishing and rank structure}

We first record two properties that do not require any assumption on
the location of the enclosed zeros.

\begin{proposition}[Vanishing criterion]
\label{prop:zero-free-iff}
For every $m\geq1$,
\begin{equation}
H_m(\Gamma;\phi_\Gamma)=0
\quad\Longleftrightarrow\quad
\Omega_\Gamma
\text{ contains no zero of }\Xi.
\label{eq:hankel-zero-iff}
\end{equation}
\end{proposition}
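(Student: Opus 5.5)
The plan is to read both directions off the residue representation in Lemma~\ref{lem:power-sum}, or equivalently the reduced Vandermonde factorization \eqref{eq:reduced-vander-factorization}. The decisive observation is that the top-left entry of $H_m(\Gamma;\phi_\Gamma)$ is $\mu_0(\Gamma;\phi_\Gamma)$. By \eqref{eq:count-identity}, this entry equals the zero count $N_{\Omega_\Gamma}(\Xi)$, whatever coordinate function is used.

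For the direction ``$\Leftarrow$'', suppose $\Omega_\Gamma$ contains no zero of $\Xi$. Then $q=0$, and every sum in \eqref{eq:power-sum-coordinate} is empty, so $\mu_k(\Gamma;\phi_\Gamma)=0$ for all $k\ge 0$. One can also argue directly: $\phi_\Gamma^kG$ is holomorphic on a neighborhood of $\overline{\Omega_\Gamma}$, so Cauchy's theorem makes each contour integral vanish. Either way, every entry $\mu_{i+j}$ of $H_m$ is zero, and $H_m(\Gamma;\phi_\Gamma)=0$ for every $m\ge1$.

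For the direction ``$\Rightarrow$'', suppose $H_m(\Gamma;\phi_\Gamma)=0$ for some $m\ge 1$. In particular its $(0,0)$ entry vanishes, so $\mu_0(\Gamma;\phi_\Gamma)=0$. By \eqref{eq:count-identity}, $\sum_{\ell=1}^q n_\ell=0$. Each multiplicity satisfies $n_\ell\ge1$, so this forces $q=0$, meaning $\Omega_\Gamma$ is zero-free. This holds even when $m=1$, which is why the statement is uniform in $m$.

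There is no real obstacle here. The only point to stress is that positivity of the weights is what rules out cancellation. The complex nodes $\zeta_j$ could in principle make higher moments cancel, but they play no role, because $\mu_0$ uses only the weights $\omega_j>0$. Finiteness of $q$ is also needed for the residue theorem. It holds because $\Xi$ is not identically zero and is holomorphic on a neighborhood of the compact set $\overline{\Omega_\Gamma}$, so its zeros there are isolated and hence finite in number.
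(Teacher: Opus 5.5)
Your proposal is correct and follows essentially the same route as the paper: in one direction the residue sum is empty, so all moments vanish; in the other, the $(0,0)$ entry $\mu_0(\Gamma;\phi_\Gamma)=N_{\Omega_\Gamma}(\Xi)$ forces a zero-free interior. Your extra remarks on the positivity of the multiplicities and the finiteness of the enclosed zero set only make explicit what the paper's argument uses implicitly.
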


\begin{proof}
If $\Omega_\Gamma$ contains no zero of $\Xi$, then the residue sum
in \eqref{eq:power-sum-identity} is empty. Hence all contour moments
vanish and therefore
$H_m(\Gamma;\phi_\Gamma)=0$.
Conversely, if $H_m(\Gamma;\phi_\Gamma)=0$, then its $(0,0)$-entry
satisfies
$\mu_0(\Gamma;\phi_\Gamma)=0$.
By \eqref{eq:count-identity}, this zeroth moment is the number of zeros
of $\Xi$ in $\Omega_\Gamma$, counted with multiplicity. Thus the
interior is zero-free.
\end{proof}

\begin{proposition}[Rank of the contour Hankel matrix]
\label{prop:rank-property}
Let $r$ be the number of distinct support points of $\nu_\Gamma$.
If $m\geq r$, then
\begin{equation}
\operatorname{rank}
H_m(\Gamma;\phi_\Gamma)=r.
\label{eq:rank-r}
\end{equation}
In particular, if $\phi_\Gamma$ separates the $q$ distinct enclosed
zeros, then
$$
\operatorname{rank}
H_m(\Gamma;\phi_\Gamma)=q,
\qquad
m\geq q.
$$
\end{proposition}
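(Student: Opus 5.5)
We will work from the reduced Vandermonde factorization \eqref{eq:reduced-vander-factorization}, $H_m=\widetilde V_m\widetilde W\widetilde V_m^T$, where $\widetilde V_m\in\mathbb C^{m\times r}$ has columns $v_m(\zeta_1),\ldots,v_m(\zeta_r)$ and $\widetilde W=\operatorname{diag}(\omega_1,\ldots,\omega_r)$. The plan is to show that, for $m\ge r$, both outer factors have full rank $r$ and the middle factor is invertible; the rank of the product is then $r$. The degenerate case $r=0$ (zero-free interior) is immediate, since $H_m=0$ by Proposition~\ref{prop:zero-free-iff}. So assume $r\ge1$.

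\textbf{Step 1: the outer factors have full rank.} The top $r\times r$ block of $\widetilde V_m$ consists of the rows $i=0,\ldots,r-1$. It is the square Vandermonde matrix $(\zeta_j^{\,i})$, whose determinant is $\prod_{j<k}(\zeta_k-\zeta_j)$. This is nonzero because the nodes $\zeta_j$ are pairwise distinct. Hence $\operatorname{rank}\widetilde V_m=r$ whenever $m\ge r$, and the same holds for $\widetilde V_m^T$.

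\textbf{Step 2: the middle factor is invertible.} Each $\omega_j$ is a sum of positive multiplicities and so is a positive integer. Therefore $\widetilde W$ is invertible.

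\textbf{Step 3: the product has rank exactly $r$.} The upper bound $\operatorname{rank}H_m\le r$ is immediate from the factorization. For the lower bound, the full row rank of $\widetilde V_m^T\in\mathbb C^{r\times m}$ makes it surjective onto $\mathbb C^r$. Then $\widetilde W$ maps $\mathbb C^r$ bijectively onto itself, and $\widetilde V_m$ is injective on $\mathbb C^r$. So the range of $H_m$ equals the column space of $\widetilde V_m$, which has dimension $r$. Equivalently, one can use Sylvester's rank inequality twice.

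\textbf{The separating case.} When $\phi_\Gamma$ separates the enclosed zeros we have $r=q$ (noted after \eqref{eq:distinct-support-representation}), which gives the second assertion.

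The only step needing care is Step~3. Because the factorization uses the ordinary transpose, Gram-matrix or positivity arguments are unavailable. The surjectivity/injectivity argument avoids this issue, since it is purely linear-algebraic and works over $\mathbb C$ without any Hermitian structure.
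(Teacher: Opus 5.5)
Your proof is correct and follows essentially the same route as the paper: both take the upper bound from the reduced factorization $H_m=\widetilde V_m\widetilde W\widetilde V_m^T$, and both get the lower bound from the nonsingularity of the square Vandermonde block $V_r$ together with the invertibility of $\widetilde W$. The only cosmetic difference is that the paper exhibits the nonsingular leading $r\times r$ principal block $V_r\widetilde W V_r^T$ of $H_m$, whereas you show that the range of $H_m$ equals the $r$-dimensional column space of $\widetilde V_m$.
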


\begin{proof}
The reduced factorization \eqref{eq:reduced-vander-factorization} gives
$$
\operatorname{rank}H_m(\Gamma;\phi_\Gamma)\leq r.
$$
Let $V_r$ be the square Vandermonde matrix formed from the first $r$ rows of $\widetilde V_m$. Since the nodes $\zeta_1,\ldots,\zeta_r$ are distinct, $V_r$ is nonsingular. The leading $r\times r$ principal block of $H_m$ is
$$
V_r\widetilde W V_r^T,
$$
which is nonsingular because $\widetilde W$ is positive diagonal. Hence
$$
\operatorname{rank}H_m(\Gamma;\phi_\Gamma)\geq r,
$$
and the claimed equality follows.
\end{proof}

Thus the rank counts distinct coordinate nodes rather than total
multiplicity. A multiple zero changes the corresponding weight but
does not create an additional support point. Likewise, distinct zeros
mapped to the same coordinate value cannot be separated by the moment
sequence.

\subsection{Real-coordinate signatures and inertia}
\label{subsec:real-signatures-inertia}

The contour Hankel matrices are generally complex symmetric rather
than Hermitian. Positivity becomes meaningful when the coordinate
measure is invariant under complex conjugation.

We call a contour-coordinate pair $(\Gamma,\phi_\Gamma)$
conjugation-compatible if $\Omega_\Gamma$ is invariant under complex
conjugation and
\begin{equation}
\phi_\Gamma(\overline t)
=
\overline{\phi_\Gamma(t)}
\label{eq:conjugation-compatible-coordinate}
\end{equation}
throughout a neighborhood of $\overline{\Omega_\Gamma}$. Since
$\Xi(\overline t)=\overline{\Xi(t)}$, the nonreal coordinate nodes
then occur in conjugate pairs with equal positive weights, and
$H_m(\Gamma;\phi_\Gamma)$ is real symmetric.

Real-centered circular contours equipped with
$$
\phi_{x,\rho}(t)=\frac{t-x}{\rho},
\qquad
x\in\mathbb R,
\quad
\rho>0,
$$
are conjugation-compatible.

\begin{proposition}[PSD for real coordinate support]
\label{prop:psd-real-zeros}
Suppose that all support points
$\zeta_1,\ldots,\zeta_r$ of $\nu_\Gamma$ are real. Then
$H_m(\Gamma;\phi_\Gamma)$ is real symmetric positive semidefinite for
every $m\geq1$, and
\begin{equation}
\operatorname{rank}
H_m(\Gamma;\phi_\Gamma)
=
\min\{m,r\}.
\label{eq:real-rank}
\end{equation}
\end{proposition}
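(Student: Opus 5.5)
The plan is to work entirely from the reduced Vandermonde factorization \eqref{eq:reduced-vander-factorization}, $H_m=\widetilde V_m\widetilde W\widetilde V_m^T$, in which the nodes $\zeta_j$ are now real and the weights $\omega_j$ are positive integers. Realness follows at once. Every moment $\mu_k=\sum_j\omega_j\zeta_j^k$ is a real number, so $H_m$ is a real matrix. It is symmetric because it is Hankel, or equivalently because of the transpose structure of the factorization.

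For positive semidefiniteness, take any $c\in\mathbb R^m$ (or $c\in\mathbb C^m$ with the conjugate form) and compute the quadratic form with the rank-one decomposition \eqref{eq:hankel-sum}:
\[
c^TH_mc=\sum_{j=1}^{r}\omega_j\bigl(v_m(\zeta_j)^Tc\bigr)^2=\sum_{j=1}^{r}\omega_j\,p_c(\zeta_j)^2\ \ge 0,
\]
where $p_c(u)=\sum_{i=0}^{m-1}c_iu^i$. The squares are nonnegative because $\zeta_j$ and $c$ are real, and the weights $\omega_j$ are positive.

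For the rank formula I would split into two cases. If $m\ge r$, Proposition~\ref{prop:rank-property} already gives rank $r=\min\{m,r\}$. If $m<r$, the upper bound $\operatorname{rank}H_m\le m$ is trivial. For the lower bound I would use the quadratic form above. The identity $c^TH_mc=0$ forces $p_c(\zeta_j)=0$ for all $r$ distinct nodes. Since $\deg p_c\le m-1<r$, this gives $p_c\equiv0$, so $c=0$. Thus $H_m$ is positive definite, hence nonsingular, and has rank $m$.

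A related subtlety is that the nonsingularity argument for $m<r$ genuinely needs positivity. For general complex nodes the form $c^TH_mc$ could vanish without $c=0$, so the argument does not transfer. An alternative route is that $\widetilde V_m$ has full row rank $m$ whenever $m\le r$, because any $m$ of its columns form a nonsingular square Vandermonde matrix. Then $H_m=(\widetilde V_m\widetilde W^{1/2})(\widetilde V_m\widetilde W^{1/2})^T$ with a real factor of full row rank, which gives rank $m$ directly. I expect this case $m<r$ to be the only step beyond routine bookkeeping.
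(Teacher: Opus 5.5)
Your proof is correct and essentially matches the paper's. The positive-semidefiniteness step is the same quadratic-form computation through the real Vandermonde factor. Your ``alternative route'' for the rank is exactly the paper's argument, applied uniformly in $m$: write $H_m=(\widetilde V_m\widetilde W^{1/2})(\widetilde V_m\widetilde W^{1/2})^T$, use $\operatorname{rank}(BB^T)=\operatorname{rank}B$ for real $B$, and note that the real Vandermonde matrix has rank $\min\{m,r\}$. Your main case split, citing Proposition~\ref{prop:rank-property} for $m\geq r$ and using positive definiteness via the polynomial-root argument for $m<r$, rests on the same positivity fact and is simply a slightly longer packaging of it.
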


\begin{proof}
In this case, $\widetilde V_m$ is real and
$\widetilde W$ is positive diagonal. Hence, for every
$c\in\mathbb R^m$,
$$
c^T H_m(\Gamma;\phi_\Gamma)c
=
(\widetilde V_m^Tc)^T
\widetilde W
(\widetilde V_m^Tc)
\geq0.
$$
Moreover,
$$
H_m
=
\bigl(\widetilde V_m\widetilde W^{1/2}\bigr)
\bigl(\widetilde V_m\widetilde W^{1/2}\bigr)^T,
$$
so its rank equals the rank of the real Vandermonde matrix
$\widetilde V_m$, namely $\min\{m,r\}$.
\end{proof}

The preceding proposition gives only one direction. For sufficiently
large matrix order, conjugation symmetry yields a complete inertia
classification.

\begin{theorem}[Inertia of a conjugation-symmetric contour Hankel matrix]
\label{thm:inertia-conjugation-symmetric}
Suppose that $(\Gamma,\phi_\Gamma)$ is conjugation-compatible and
that the distinct support points of $\nu_\Gamma$ consist of
$r_0$ real nodes
$x_1,\ldots,x_{r_0}\in\mathbb R$
and $c$ nonreal conjugate pairs
$z_1,\overline z_1,\ldots,z_c,\overline z_c$,
$\operatorname{Im}z_j>0$.
Let the positive weights at the real nodes be
$\omega_1,\ldots,\omega_{r_0}$, and suppose that the two nodes
$z_j$ and $\overline z_j$ have the same positive weight
$\eta_j$. Set
$r=r_0+2c$.
Then $H_m(\Gamma;\phi_\Gamma)$ is real symmetric. Moreover, for
every $m\geq r$,
\begin{equation}
\operatorname{In}
H_m(\Gamma;\phi_\Gamma)=\bigl(r_0+c,\ c,\ m-r\bigr),
\label{eq:contour-hankel-inertia}
\end{equation}
where
$\operatorname{In}(M)=
\bigl(n_+(M),n_-(M),n_0(M)\bigr)$
denotes the inertia of a real symmetric matrix $M$.
\end{theorem}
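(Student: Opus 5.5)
The proof has two parts: first show that $H_m(\Gamma;\phi_\Gamma)$ is real symmetric, then compute its inertia by building an explicit real congruence.

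\textbf{Real symmetry.} We read off the moments from \eqref{eq:distinct-support-representation}:
$$
\mu_k=\sum_{i=1}^{r_0}\omega_i x_i^k+\sum_{j=1}^{c}\eta_j\bigl(z_j^k+\overline z_j^{\,k}\bigr).
$$
Every term is real, since $z^k+\overline z^{\,k}=2\operatorname{Re}z^k$. Hence $H_m$ is a real Hankel matrix and in particular real symmetric.

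\textbf{Real factorization.} Starting from \eqref{eq:reduced-vander-factorization}, we group each conjugate pair. Write
$$
v_m(z_j)=a_j+\mathrm i b_j,\qquad a_j,b_j\in\mathbb R^m.
$$
Then
$$
v_m(z_j)v_m(z_j)^T+v_m(\overline z_j)v_m(\overline z_j)^T=2\bigl(a_ja_j^T-b_jb_j^T\bigr).
$$
It follows that
$$
H_m=R_m D R_m^T,
$$
where
$$
R_m=\bigl[v_m(x_1),\ldots,v_m(x_{r_0}),a_1,b_1,\ldots,a_c,b_c\bigr]\in\mathbb R^{m\times r},
$$
$$
D=\operatorname{diag}\bigl(\omega_1,\ldots,\omega_{r_0},2\eta_1,-2\eta_1,\ldots,2\eta_c,-2\eta_c\bigr).
$$
The diagonal matrix $D$ has exactly $r_0+c$ positive entries and $c$ negative entries.

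\textbf{Full column rank of $R_m$.} This is the key step. The columns $a_j,b_j$ span the same complex space as $v_m(z_j),v_m(\overline z_j)$, because
$$
a_j=\tfrac12\bigl(v_m(z_j)+v_m(\overline z_j)\bigr),\qquad b_j=\tfrac1{2\mathrm i}\bigl(v_m(z_j)-v_m(\overline z_j)\bigr).
$$
So $R_m=\widetilde V_m T$ for a block-diagonal invertible complex matrix $T$. The nodes are distinct and $m\ge r$, so $\widetilde V_m$ has full column rank $r$ (its leading $r\times r$ Vandermonde block is nonsingular, as in the proof of Proposition~\ref{prop:rank-property}). Therefore $R_m$ has rank $r$ over $\mathbb C$, and since it is real, also over $\mathbb R$.

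\textbf{Sylvester's law of inertia.} Extend $R_m$ to an invertible real $m\times m$ matrix $S=[R_m\ E]$ by adjoining $m-r$ real columns. Then
$$
H_m=S\,\operatorname{diag}(D,0_{m-r})\,S^T.
$$
This is a real congruence, so Sylvester's law gives
$$
\operatorname{In}H_m=(r_0+c,\,c,\,m-r),
$$
which is \eqref{eq:contour-hankel-inertia}.

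The rank argument is the only step needing care: one must check that the change of basis from $\{v_m(z_j),v_m(\overline z_j)\}$ to $\{a_j,b_j\}$ preserves linear independence. Once that holds, the rest is bookkeeping.
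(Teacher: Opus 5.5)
Your proposal is correct and follows the paper's proof essentially step by step. It uses the same real-moment argument for symmetry, the same real factorization $H_m=B_mJB_m^T$ with columns $v_m(x_i),a_j,b_j$, and the same invertible column change from the complex Vandermonde matrix to get full column rank $r$. The only difference is in the last step: you complete $R_m$ to a nonsingular $S=[R_m\ E]$ and apply Sylvester's law directly to $H_m=S\operatorname{diag}(D,0_{m-r})S^T$, whereas the paper takes a thin QR factorization $B_m=QR$, applies Sylvester to $RJR^T$, and gets the $m-r$ zero eigenvalues from the orthogonal complement of the range of $Q$.
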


\begin{proof}
For every $k\geq0$,
$\mu_k=
\sum_{i=1}^{r_0}\omega_i x_i^k+
\sum_{j=1}^{c}\eta_j
\left(z_j^k+\overline z_j^{k}\right)
\in\mathbb R$.
Hence $H_m(\Gamma;\phi_\Gamma)$ is real symmetric.

For each nonreal node, write
$v_m(z_j)=a_j+\mathrm i b_j$,
$a_j, b_j\in\mathbb R^m$.
The contribution of the conjugate pair is
\begin{align*}
\eta_j
\left[v_m(z_j)v_m(z_j)^T + v_m(\overline z_j)v_m(\overline z_j)^T\right]=
2\eta_j\bigl(a_ja_j^T-b_jb_j^T\bigr).
\end{align*}
Consequently,
\begin{equation}
H_m(\Gamma;\phi_\Gamma)=B_mJ B_m^T,
\label{eq:real-inertia-factorization}
\end{equation}
where
$$B_m=
\begin{bmatrix}
 v_m(x_1)&\cdots&v_m(x_{r_0})&
 a_1&b_1&\cdots&a_c&b_c
\end{bmatrix}$$
and
$$J=
\operatorname{diag}
\bigl(
\omega_1,\ldots,\omega_{r_0},
2\eta_1,-2\eta_1,\ldots,2\eta_c,-2\eta_c
\bigr).$$

The complex Vandermonde matrix generated by the distinct nodes
$$
x_1,\ldots,x_{r_0},
 z_1,\overline z_1,\ldots,z_c,\overline z_c
$$
has full column rank for $m\geq r$. Replacing each pair of columns
$v_m(z_j),v_m(\overline z_j)$ by
$a_j,b_j$ is an invertible column transformation. Hence $B_m$ has
full column rank $r$.

Choose a thin QR factorization $B_m=QR$, where
$Q\in\mathbb R^{m\times r}$ has orthonormal columns and
$R\in\mathbb R^{r\times r}$ is nonsingular. Then
$$
H_m
=
Q(RJR^T)Q^T.
$$
By Sylvester's law of inertia, $RJR^T$ has the same inertia as
$J$, namely $(r_0+c,c,0)$. The orthogonal complement of the range
of $Q$ contributes $m-r$ zero eigenvalues. This proves
\eqref{eq:contour-hankel-inertia}.
\end{proof}

\begin{corollary}[Local real-support criterion]
\label{cor:local-real-support}
Under the assumptions of
Theorem~\ref{thm:inertia-conjugation-symmetric}, let $r$ be the
number of distinct coordinate nodes. Then, for every $m\geq r$,
\begin{equation}
H_m(\Gamma;\phi_\Gamma)\succeq0
\quad\Longleftrightarrow\quad
\operatorname{supp}\nu_\Gamma\subset\mathbb R.
\label{eq:local-real-support-criterion}
\end{equation}
Moreover,
\begin{equation}
n_-
\bigl(H_m(\Gamma;\phi_\Gamma)\bigr)=c,
\qquad
m\geq r,
\label{eq:negative-index-counts-pairs}
\end{equation}
so the negative index equals the number of distinct nonreal conjugate
pairs represented by the contour coordinate.
\end{corollary}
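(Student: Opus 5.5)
The corollary follows from Theorem~\ref{thm:inertia-conjugation-symmetric} with almost no further work. The plan is to read off both claims from the inertia formula \eqref{eq:contour-hankel-inertia}: for $m\geq r$,
$$
\operatorname{In}H_m(\Gamma;\phi_\Gamma)=(r_0+c,\ c,\ m-r).
$$
The second assertion \eqref{eq:negative-index-counts-pairs} is then immediate, since the middle entry of the inertia triple is $n_-(H_m)=c$. Here $c$ is, by the hypotheses of the theorem, the number of distinct nonreal conjugate pairs among the support points of $\nu_\Gamma$.

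For the equivalence \eqref{eq:local-real-support-criterion}, we use that $H_m$ is real symmetric by the theorem. A real symmetric matrix is positive semidefinite exactly when it has no negative eigenvalues, that is, when $n_-(H_m)=0$. By the previous step this holds if and only if $c=0$. Since the distinct support points of $\nu_\Gamma$ are exactly the $r_0$ real nodes together with the $2c$ nonreal nodes $z_j,\overline z_j$, the condition $c=0$ is equivalent to $\operatorname{supp}\nu_\Gamma\subset\mathbb R$.

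As a consistency check, the direction ``real support $\Rightarrow$ PSD'' also follows independently from Proposition~\ref{prop:psd-real-zeros}, and it holds for every $m\geq1$. The hypothesis $m\geq r$ is needed only for the converse. When $m<r$ a nonreal pair may fail to produce a negative direction, because $B_m$ need not have full column rank and the Sylvester argument breaks down.

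There is no genuine obstacle here. The only point requiring care is to note that positive semidefiniteness is taken in the real symmetric sense, so that it is characterized by the absence of negative eigenvalues. This is legitimate because conjugation-compatibility guarantees that $H_m$ is real symmetric.
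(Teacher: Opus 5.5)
Your proposal is correct and matches the paper's proof: both read $n_-(H_m)=c$ directly from the inertia formula \eqref{eq:contour-hankel-inertia} and use that a real symmetric matrix is positive semidefinite exactly when $n_-=0$, i.e.\ when $c=0$, which means the support is real. Your side remark is also right: for $m<r$ the converse can fail, since for example $H_1=\mu_0>0$ is always positive semidefinite.
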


\begin{proof}
The conclusion follows immediately from
\eqref{eq:contour-hankel-inertia}. The matrix is positive semidefinite
if and only if its negative index is zero, which is equivalent to
$c=0$.
\end{proof}

\begin{corollary}[A nonreal conjugate pair]
\label{cor:nonreal-pair-violation}
Suppose that the coordinate measure consists of a single nonreal
conjugate pair of equal multiplicity $n$, with nodes
$u=a+\mathrm i b$,
$\overline u=a-\mathrm i b$,
$b\neq0$.
Then, for every $m\geq2$,
$\operatorname{In}
H_m(\Gamma;\phi_\Gamma)=(1,1,m-2)$.
In particular, $H_m(\Gamma;\phi_\Gamma)$ is indefinite for every
$m\geq2$. At the smallest admissible order,
$$
H_2(\Gamma;\phi_\Gamma)
=
2n
\begin{pmatrix}
1&a\\
a&a^2-b^2
\end{pmatrix},
$$
and
$$
\det H_2(\Gamma;\phi_\Gamma)
=
-4n^2b^2<0.
$$
Thus $H_2$ is the lowest-order contour Hankel matrix that detects
the nonreal character of the conjugate pair.
\end{corollary}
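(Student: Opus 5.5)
The statement is a special case of Theorem~\ref{thm:inertia-conjugation-symmetric}, so the plan is to specialize that result and then compute $H_2$ directly.

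\textbf{Inertia.} The coordinate measure here is $\nu_\Gamma = n\delta_u + n\delta_{\overline u}$ with $b\neq0$. Since $b\neq 0$, the nodes $u$ and $\overline u$ are distinct. In the notation of the theorem this means
\[
r_0 = 0,\qquad c = 1,\qquad \eta_1 = n,\qquad r = 2 .
\]
For the theorem I need the pair to be labelled with $\operatorname{Im} z_1 > 0$. I take $z_1 = u$ if $b>0$ and $z_1 = \overline u$ if $b<0$; the weights are equal, so the labelling does not matter. Conjugation-compatibility is part of the standing assumptions of the corollary, so Theorem~\ref{thm:inertia-conjugation-symmetric} applies for every $m\geq 2$. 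It gives
\[
\operatorname{In} H_m(\Gamma;\phi_\Gamma) = (1,1,m-2).
\]
A real symmetric matrix with $n_+ = 1$ and $n_- = 1$ is indefinite, which is the second claim.

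\textbf{Explicit $H_2$.} I use the moments from \eqref{eq:distinct-support-representation}:
\[
\mu_k = n\bigl(u^k + \overline u^{\,k}\bigr) = 2n\operatorname{Re}(u^k).
\]
This gives
\[
\mu_0 = 2n,\qquad \mu_1 = 2na,\qquad \mu_2 = 2n(a^2 - b^2),
\]
which yields the displayed matrix. Its determinant is
\[
4n^2\bigl((a^2 - b^2) - a^2\bigr) = -4n^2 b^2 < 0 .
\]
A negative determinant of a $2\times2$ real symmetric matrix independently confirms inertia $(1,1,0)$.

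\textbf{Lowest order.} The claim that $H_2$ is the lowest-order matrix detecting nonreality needs one remark. $H_1 = (\mu_0) = (2n) > 0$, and it carries no information about where the nodes lie. Hence $m=2$ is the smallest order at which a negative direction can appear.

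There is no real obstacle here. The only points needing care are that $b\neq0$ makes the two nodes distinct, so $r=2$, and the choice of labelling so that the theorem's condition $\operatorname{Im} z_j > 0$ holds.
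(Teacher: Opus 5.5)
Your proposal is correct and matches the paper's proof, which likewise specializes Theorem~\ref{thm:inertia-conjugation-symmetric} to $r_0=0$, $c=1$, $r=2$ and then computes $H_2$ and its determinant directly from $\mu_k=n\bigl(u^k+\overline u^{\,k}\bigr)$. One small precision: the corollary does not literally assume conjugation-compatibility, but it is not needed, since the theorem's proof uses only that $\nu_\Gamma$ consists of conjugate nodes with equal positive weights, and that is exactly the hypothesis here.
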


\begin{proof}
This is the case $r_0=0$, $c=1$, and $r=2$ of
Theorem~\ref{thm:inertia-conjugation-symmetric}. Hence, for every
$m\geq2$,
$$
\operatorname{In}
H_m(\Gamma;\phi_\Gamma)
=
(1,1,m-2).
$$
In particular, $H_m(\Gamma;\phi_\Gamma)$ is indefinite for every
$m\geq2$. At the smallest admissible order $m=2$, direct
calculation gives
$$
H_2(\Gamma;\phi_\Gamma)
=
2n
\begin{pmatrix}
1&a\\
a&a^2-b^2
\end{pmatrix},
$$
and therefore
$$
\det H_2(\Gamma;\phi_\Gamma)
=
-4n^2b^2<0.
$$
\end{proof}

The inertia theorem gives a finite-dimensional local positivity
formulation of the Riemann hypothesis. For $x\in\mathbb R$ and
$\rho>0$, write
$$
\mathcal N_\rho(x)
:=
\mu_0\bigl(\Gamma_\rho(x);\phi_{x,\rho}\bigr)
=
N_{D_\rho(x)}(\Xi)
$$
for the number of zeros in $D_\rho(x)$, counted with multiplicity.

\begin{corollary}[Local Hankel-positivity formulation of RH]
\label{cor:rh-local-hankel-positivity}
The Riemann hypothesis is equivalent to the following statement: for
every $x\in\mathbb R$ and every $\rho>0$ such that $\Xi$ has no
zero on $\Gamma_\rho(x)$,
\begin{equation}
H_m\bigl(\Gamma_\rho(x);\phi_{x,\rho}\bigr)\succeq0
\qquad
\text{for every }m\geq1.
\label{eq:rh-all-local-hankel-psd}
\end{equation}
Equivalently, whenever $\mathcal N_\rho(x)>0$, it is sufficient to require
\begin{equation}
H_{\mathcal N_\rho(x)}
\bigl(\Gamma_\rho(x);\phi_{x,\rho}\bigr)
\succeq0.
\label{eq:rh-finite-local-psd}
\end{equation}
\end{corollary}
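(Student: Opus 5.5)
The plan is to prove three implications: RH implies \eqref{eq:rh-all-local-hankel-psd}; \eqref{eq:rh-all-local-hankel-psd} implies the finite condition \eqref{eq:rh-finite-local-psd}; and \eqref{eq:rh-finite-local-psd} implies RH.

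\emph{RH implies \eqref{eq:rh-all-local-hankel-psd}.} Fix $x\in\mathbb R$ and an admissible $\rho>0$. Under RH every zero $\tau_\ell$ enclosed by $\Gamma_\rho(x)$ is real. Since $x\in\mathbb R$ and $\rho>0$, each coordinate node $\phi_{x,\rho}(\tau_\ell)=(\tau_\ell-x)/\rho$ is real. Proposition~\ref{prop:psd-real-zeros} then gives $H_m\succeq0$ for every $m\ge1$. If the disk is zero-free, the matrix vanishes by Proposition~\ref{prop:zero-free-iff}, which is also positive semidefinite.

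\emph{\eqref{eq:rh-all-local-hankel-psd} implies \eqref{eq:rh-finite-local-psd}.} This is immediate: take $m=\mathcal N_\rho(x)$.

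\emph{\eqref{eq:rh-finite-local-psd} implies RH.} Suppose $\Xi$ has a nonreal zero $\tau$. By the Schwarz symmetry \eqref{eq:Xi-symmetries}, $\overline\tau$ is also a zero with the same multiplicity. Choose $x=\operatorname{Re}\tau$. The zeros of the entire function $\Xi$ are discrete, so the radii $|\tau'-x|$ over zeros $\tau'$ form a discrete set. Hence we can pick $\rho>|\operatorname{Im}\tau|$ so that no zero lies on $\Gamma_\rho(x)$.

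The pair $(\Gamma_\rho(x),\phi_{x,\rho})$ is conjugation-compatible, as noted before Proposition~\ref{prop:psd-real-zeros}. The node $\phi_{x,\rho}(\tau)=\mathrm i\operatorname{Im}\tau/\rho$ is nonreal, so the number $c$ of nonreal conjugate pairs among the nodes satisfies $c\ge1$. The number of distinct nodes obeys $r\le q\le\mathcal N_\rho(x)$, where $q$ is the number of distinct enclosed zeros; the inequality $q\le\mathcal N_\rho(x)$ holds because multiplicities are at least one. So $m=\mathcal N_\rho(x)\ge r$, and Corollary~\ref{cor:local-real-support} (equivalently Theorem~\ref{thm:inertia-conjugation-symmetric}) gives $n_-(H_m)=c\ge1$. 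Thus $H_{\mathcal N_\rho(x)}$ is not positive semidefinite, which contradicts \eqref{eq:rh-finite-local-psd}.

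The only delicate point is this last step. One must check that the finite order $\mathcal N_\rho(x)$ already reaches the threshold $m\ge r$ of the inertia theorem. It does, since the count with multiplicity dominates the number of distinct coordinate nodes. One must also check that the hypotheses of Theorem~\ref{thm:inertia-conjugation-symmetric} hold, in particular that conjugate nodes carry equal weights. This follows because conjugation preserves multiplicities and maps the disk to itself, so the aggregated weights $\omega_j$ agree at $\zeta_j$ and $\overline{\zeta_j}$.
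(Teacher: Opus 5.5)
Your proof is correct and follows essentially the same route as the paper. The forward direction uses Proposition~\ref{prop:psd-real-zeros}; the converse uses a disk centered at $x=\operatorname{Re}\tau$ with $\rho>|\operatorname{Im}\tau|$, together with Theorem~\ref{thm:inertia-conjugation-symmetric} and the bound $r\le\mathcal N_\rho(x)$ to reach the finite order, and your cyclic ordering and explicit checks of the equal-weight hypothesis and the zero-free case only spell out what the paper leaves implicit.
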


\begin{proof}
If RH holds, then all zeros enclosed by a real-centered disk are real.
Proposition~\ref{prop:psd-real-zeros} therefore gives
\eqref{eq:rh-all-local-hankel-psd}.

Conversely, suppose that $\Xi$ has a nonreal zero $\tau$. By
conjugation symmetry, $\overline\tau$ is also a zero with the same
multiplicity. Set $x=\operatorname{Re}\tau$, and choose
$\rho>|\operatorname{Im}\tau|$ so that no zero lies on
$\Gamma_\rho(x)$. The disk contains the conjugate pair
$\tau,\overline\tau$. If $r$ is the number of distinct coordinate
nodes in the disk, then
Theorem~\ref{thm:inertia-conjugation-symmetric} shows that
$H_m$ has a negative eigenvalue for every $m\geq r$, contradicting
\eqref{eq:rh-all-local-hankel-psd}.

Finally, $r\leq \mathcal N_\rho(x)$, because the latter counts multiplicity.
Hence \eqref{eq:rh-finite-local-psd} is sufficient to invoke
Corollary~\ref{cor:local-real-support} in each nonempty disk.
\end{proof}

\begin{remark}
Corollary~\ref{cor:rh-local-hankel-positivity} is a local
finite-dimensional reformulation of RH, not a proof. Establishing the
positivity in \eqref{eq:rh-finite-local-psd} directly from the analytic
or arithmetic structure of $\Xi$, without prior information on its
zeros, remains the essential unresolved step.
\end{remark}

\subsection{Dependence on coordinates and contour deformation}

The contour determines which zeros contribute to the moment sequence,
whereas the coordinate function determines their numerical
representation. We first record covariance under affine coordinate
changes.

Let
\begin{equation}
\psi_\Gamma=a\phi_\Gamma+b,
\qquad
a,b\in\mathbb C,
\quad
a\neq0,
\label{eq:affine-coordinate-change}
\end{equation}
and define $T_m(a,b)\in\mathbb C^{m\times m}$ by
\begin{equation}
T_m(a,b)_{kr}
=
\binom{k}{r}a^r b^{k-r},
\qquad
0\leq r\leq k\leq m-1,
\label{eq:T-affine-entries}
\end{equation}
with all remaining entries equal to zero. Then
$$
v_m(au+b)=T_m(a,b)v_m(u).
$$

\begin{proposition}[Affine coordinate covariance]
\label{prop:affine-coordinate-covariance}
Under the affine change \eqref{eq:affine-coordinate-change},
\begin{equation}
H_m(\Gamma;\psi_\Gamma)
=
T_m(a,b)
H_m(\Gamma;\phi_\Gamma)
T_m(a,b)^T.
\label{eq:affine-congruence}
\end{equation}
Consequently,
\begin{equation}
\operatorname{rank}
H_m(\Gamma;\psi_\Gamma)
=
\operatorname{rank}
H_m(\Gamma;\phi_\Gamma).
\label{eq:affine-rank-invariance}
\end{equation}
If $a,b\in\mathbb R$ and the matrices are real symmetric, then they
have the same inertia. In particular, positive semidefiniteness and
the presence of negative eigenvalues are preserved.
\end{proposition}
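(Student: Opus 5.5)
The plan is to reduce everything to the Vandermonde factorization of Proposition~\ref{prop:vander-factorization}, since the coordinate values under $\psi_\Gamma$ are simply $a u_\ell + b$. First I will verify the stated identity $v_m(au+b)=T_m(a,b)v_m(u)$. The $k$-th entry of the left side is $(au+b)^k=\sum_{r=0}^{k}\binom{k}{r}a^r b^{k-r}u^r$ by the binomial theorem. This is exactly the $k$-th row of $T_m(a,b)$ applied to $v_m(u)$, because the entries with $r>k$ vanish and $r\le k\le m-1$ keeps all powers inside $v_m(u)$.

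Next, the enclosed zeros and their multiplicities depend only on $\Gamma$, so they are the same for both coordinates, while the nodes become $\psi_\Gamma(\tau_\ell)=au_\ell+b$. Applying \eqref{eq:hankel-sum} to $\psi_\Gamma$ gives
$$
H_m(\Gamma;\psi_\Gamma)=\sum_{\ell=1}^q n_\ell\, v_m(au_\ell+b)v_m(au_\ell+b)^T .
$$
Substituting the identity above, this becomes $\sum_\ell n_\ell T_m v_m(u_\ell)v_m(u_\ell)^T T_m^T$. Factoring out $T_m$ on the left and $T_m^T$ on the right and using \eqref{eq:hankel-sum} again for $\phi_\Gamma$ yields \eqref{eq:affine-congruence}. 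Equivalently, one could argue at the level of moments, $\mu_k(\Gamma;\psi_\Gamma)=\sum_r\binom{k}{r}a^rb^{k-r}\mu_r(\Gamma;\phi_\Gamma)$, but the factorization route is cleaner.

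For the rank statement, $T_m(a,b)$ is lower triangular with diagonal entries $a^k$, $k=0,\dots,m-1$. Its determinant is $a^{m(m-1)/2}\neq0$, so it is invertible, and congruence by an invertible matrix preserves rank, giving \eqref{eq:affine-rank-invariance}. If $a,b\in\mathbb R$, then $T_m(a,b)$ is a real nonsingular matrix. Given that both Hankel matrices are real symmetric, \eqref{eq:affine-congruence} is then a real congruence, and Sylvester's law of inertia gives equal inertia. In particular $n_-=0$ is preserved, which covers both positive semidefiniteness and the presence of negative eigenvalues.

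No step is a genuine obstacle; the only care needed is the index bookkeeping in the binomial identity (rows $k$, columns $r$, zero entries above the diagonal). A second point is noting that the real-symmetry hypothesis is what licenses Sylvester's law. If $\phi_\Gamma$ gives a real symmetric matrix and $a,b$ are real, the congruence automatically makes $H_m(\Gamma;\psi_\Gamma)$ real symmetric as well.
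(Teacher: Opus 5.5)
Your proposal is correct and follows essentially the same route as the paper: substitute $v_m(au_\ell+b)=T_m(a,b)v_m(u_\ell)$ into the Vandermonde factorization to obtain the congruence, then use the invertibility of the triangular matrix $T_m(a,b)$ (diagonal $1,a,\ldots,a^{m-1}$) for rank, and Sylvester's law of inertia when $a,b$ are real. Your explicit binomial verification of the vector identity and your remark that real symmetry of one matrix transfers to the other under a real congruence are useful small additions to what the paper writes out.
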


\begin{proof}
Let $u_\ell=\phi_\Gamma(\tau_\ell)$. Then
$$
\psi_\Gamma(\tau_\ell)=au_\ell+b
$$
and
$$
v_m\bigl(\psi_\Gamma(\tau_\ell)\bigr)
=
T_m(a,b)v_m(u_\ell).
$$
The Vandermonde factorization gives
\begin{align*}
H_m(\Gamma;\psi_\Gamma)
&=
\sum_\ell n_\ell
T_m(a,b)v_m(u_\ell)v_m(u_\ell)^TT_m(a,b)^T
\\
&=
T_m(a,b)H_m(\Gamma;\phi_\Gamma)T_m(a,b)^T.
\end{align*}
The matrix $T_m(a,b)$ is triangular with diagonal entries
$1,a,\ldots,a^{m-1}$, and is therefore nonsingular. Rank invariance
follows. For real $a$ and $b$, inertia preservation follows from
Sylvester's law of inertia.
\end{proof}

For circular contours centered at $x\in\mathbb R$, the raw coordinate
$t$ and the normalized centered coordinate
$$
\phi_{x,\rho}(t)=\frac{t-x}{\rho}
$$
are related by a real affine transformation. Centering and scaling do
not create the exact rank, inertia, or positivity properties. Their
purpose is numerical: the normalized coordinate keeps enclosed nodes
inside the unit disk, controls high powers, and improves the
conditioning of the resulting moment and Hankel matrices.

We next consider deformation of the contour-coordinate pair. Let
$I\subset\mathbb R$ be an interval, and let $\Gamma_s$, $s\in I$,
be a deformation of positively oriented Jordan contours. Denote by
$\Omega_s$ the enclosed domain, and write
$$
\phi_s:=\phi_{\Gamma_s}.
$$
We assume that the coordinate values of the zeros under consideration
depend continuously on $s$.

\begin{proposition}[Rank stability under contour deformation]
\label{prop:rank-invariance-moving-contours}
Suppose that no zero of $\Xi$ crosses $\Gamma_s$ for
$s\in[s_0,s_1]$, so that the same zeros remain enclosed throughout
the deformation. Assume also that their distinct coordinate images
remain distinct. If their number is $r$, then
\begin{equation}
\operatorname{rank}
H_m(\Gamma_s;\phi_s)=r,
\qquad
m\geq r,
\quad
s\in[s_0,s_1].
\label{eq:rank-stability}
\end{equation}
\end{proposition}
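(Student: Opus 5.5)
The plan is to reduce the statement pointwise in $s$ to Proposition~\ref{prop:rank-property}; continuity in $s$ plays no further role. Fix $s\in[s_0,s_1]$. By hypothesis no zero of $\Xi$ lies on $\Gamma_s$, so assumption \eqref{eq:no-zero-on-contour} holds for $\Gamma_s$. Hence Lemma~\ref{lem:power-sum} and Proposition~\ref{prop:vander-factorization} apply to the pair $(\Gamma_s,\phi_s)$. They express $H_m(\Gamma_s;\phi_s)$ through the atomic measure $\nu_{\Gamma_s}$, which is supported on the coordinate images $\phi_s(\tau_\ell)$ of the zeros enclosed by $\Gamma_s$.

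Next I would identify the support of $\nu_{\Gamma_s}$ and count it. Since no zero crosses $\Gamma_s$ during the deformation, the enclosed set $\{\tau_1,\ldots,\tau_q\}$ with multiplicities $n_\ell$ is the same for every $s$. To justify this, I would argue that the count $\mu_0(\Gamma_s;\phi_s)=N_{\Omega_s}(\Xi)$ is integer-valued. It is also continuous in $s$ as long as $\Gamma_s$ stays off the zero set. Alternatively, I would simply read this off from the stated hypothesis, which the statement phrases as ``the same zeros remain enclosed.''

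By the assumption that the distinct coordinate images remain distinct, the number of distinct support points of $\nu_{\Gamma_s}$ equals $r$ for every $s$. The aggregated weights $\omega_j(s)$ are positive sums of multiplicities, so they are nonzero.

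Proposition~\ref{prop:rank-property}, applied to $(\Gamma_s,\phi_s)$ with this $r$, then gives $\operatorname{rank}H_m(\Gamma_s;\phi_s)=r$ for every $m\ge r$. This is \eqref{eq:rank-stability}. Concretely, the leading $r\times r$ block equals $V_r(s)\widetilde W(s)V_r(s)^T$. Here $V_r(s)$ is a square Vandermonde matrix on distinct nodes, hence nonsingular, so rank is at least $r$. The factorization \eqref{eq:reduced-vander-factorization} gives rank at most $r$.

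The only delicate point is interpretive rather than technical: making precise that ``distinct coordinate images remain distinct'' means the map $\tau_\ell\mapsto\phi_s(\tau_\ell)$ has image of constant cardinality $r$. If it means instead that $\phi_s$ separates the enclosed zeros, then $r=q$. I would state this reading explicitly. Either way the rank argument is purely static at each $s$.
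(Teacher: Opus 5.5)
Your proposal is correct and matches the paper's proof, which likewise fixes $s$, notes that the reduced coordinate measure has exactly $r$ distinct support points, and applies Proposition~\ref{prop:rank-property} at each parameter value. One small caveat on your optional justification: continuity of the integer count $\mu_0$ only shows that the total number of enclosed zeros is constant, not that the same zeros stay enclosed. For that you would need continuity of the winding number of $\Gamma_s$ about each fixed zero. Since the hypothesis states this directly, reading it off as you propose is exactly what the paper does.
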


\begin{proof}
For every fixed $s$, the reduced coordinate measure has exactly
$r$ distinct support points. Proposition~\ref{prop:rank-property}
therefore applies at each parameter value.
\end{proof}

The proposition concerns rank stability rather than entrywise
constancy. Even when the enclosed zero set is unchanged, the entries of
$H_m(\Gamma_s;\phi_s)$ generally vary because the coordinate
function changes with $s$. When a zero crosses the moving contour,
the coordinate measure gains or loses a weighted atom. The resulting
finite-rank jump and its dynamical consequences are developed in the
next section.

\section{Hankel dynamics generated by moving contours}
\label{sec:hankel-dynamics}

The preceding section describes the static information carried by a
contour Hankel matrix. We now let the contour-coordinate pair vary and
interpret the resulting matrices as a trajectory in a finite-dimensional
matrix space. Each enclosed zero contributes a rank-one feature
$$
n_\ell
v_m\bigl(u_\ell(s)\bigr)
v_m\bigl(u_\ell(s)\bigr)^T,
$$
so the evolution has two distinct components. As long as the enclosed
zero set is unchanged, only the coordinate representation moves and
the matrix varies continuously. When a zero crosses the contour, the
atomic measure gains or loses one or more weighted atoms and the matrix
undergoes a finite-rank jump. The purpose of this section is to derive
these two components, combine them into a hybrid matrix dynamical
system, and show how the crossing data can be recovered from its jumps.

Let
$$
s\longmapsto(\Gamma_s,\phi_s),
\qquad
s\in I,
$$
be a one-parameter family of contour-coordinate pairs, and let
$\Omega_s$ denote the bounded domain enclosed by $\Gamma_s$. We
assume that $\phi_s(t)$ depends continuously on $s$, is
continuously differentiable away from isolated crossing parameters,
and is holomorphic in a neighborhood of
$\overline{\Omega_s}$. For every admissible $s$, define
\begin{equation}
H_m(s)
=
H_m(\Gamma_s;\phi_s).
\label{eq:hankel-flow-definition}
\end{equation}
If $\tau_\ell$ is a zero of $\Xi$ of multiplicity $n_\ell$, set
$$
\chi_\ell(s)
=
\begin{cases}
1, & \tau_\ell\in\Omega_s,\\
0, & \tau_\ell\notin\overline{\Omega_s},
\end{cases}
\qquad
u_\ell(s)=\phi_s(\tau_\ell).
$$
Away from crossing parameters,
\begin{equation}
H_m(s)
=
\sum_\ell
n_\ell\chi_\ell(s)
v_m\bigl(u_\ell(s)\bigr)
v_m\bigl(u_\ell(s)\bigr)^T.
\label{eq:hankel-flow-atomic}
\end{equation}

\subsection{Continuous flow between crossing events}

We first consider an interval on which no zero crosses the moving
contour. In that case the indicators $\chi_\ell(s)$ are constant,
and the evolution is generated entirely by the motion of the
coordinate values $u_\ell(s)$.

Let
$$
d_m(u)
:=
\frac{\mathrm d}{\mathrm du}v_m(u)
=
\bigl(0,1,2u,\ldots,(m-1)u^{m-2}\bigr)^T.
$$

\begin{proposition}[Continuous Hankel evolution]
\label{prop:continuous-hankel-evolution}
Suppose that no zero of $\Xi$ crosses $\Gamma_s$ for
$s\in(s_0,s_1)$. Then $H_m(s)$ is continuously differentiable on
$(s_0,s_1)$ and satisfies
\begin{equation}
\frac{\mathrm d}{\mathrm ds}H_m(s)
=
\sum_{\tau_\ell\in\Omega_s}
n_\ell\dot u_\ell(s)
\left[
d_m\bigl(u_\ell(s)\bigr)v_m\bigl(u_\ell(s)\bigr)^T
+
v_m\bigl(u_\ell(s)\bigr)d_m\bigl(u_\ell(s)\bigr)^T
\right].
\label{eq:general-continuous-hankel-flow}
\end{equation}
\end{proposition}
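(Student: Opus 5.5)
The approach is to differentiate the atomic representation \eqref{eq:hankel-flow-atomic} term by term. First I would fix the enclosed set. Since no zero of $\Xi$ crosses $\Gamma_s$ for $s\in(s_0,s_1)$, each indicator $\chi_\ell(s)$ is constant on this interval. Moreover, only finitely many zeros lie in a neighborhood of the compact set $\bigcup_{s\in[s_0',s_1']}\overline{\Omega_s}$ for any compact subinterval $[s_0',s_1']\subset(s_0,s_1)$. Indeed, $\Xi$ is entire and not identically zero, and the family of contours moves continuously, so the union of the closed domains is bounded. Hence on each such compact subinterval the sum in \eqref{eq:hankel-flow-atomic} is a fixed finite sum over the zeros $\tau_\ell\in\Omega_s$, with constant weights $n_\ell$. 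I would also note that the representation holds at every $s$ in the interval, since Lemma~\ref{lem:power-sum} applies whenever no zero lies on $\Gamma_s$.

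Second, I would establish differentiability of the coordinates. Each $u_\ell(s)=\phi_s(\tau_\ell)$ is continuously differentiable in $s$ on $(s_0,s_1)$. This follows from the standing assumption that $\phi_s(t)$ is continuously differentiable in $s$ away from crossing parameters, evaluated at the fixed point $t=\tau_\ell$. The map $u\mapsto v_m(u)$ is polynomial, hence smooth, with derivative $d_m(u)$. The chain rule then gives $\frac{\mathrm d}{\mathrm ds}v_m(u_\ell(s))=\dot u_\ell(s)\,d_m(u_\ell(s))$, and this derivative is continuous in $s$.

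Third, I would apply the product rule to each rank-one term:
\[
\frac{\mathrm d}{\mathrm ds}\bigl[v_m(u_\ell)v_m(u_\ell)^T\bigr]=\dot u_\ell\bigl[d_m(u_\ell)v_m(u_\ell)^T+v_m(u_\ell)d_m(u_\ell)^T\bigr].
\]
Multiplying by $n_\ell$ and summing over the finitely many enclosed zeros yields \eqref{eq:general-continuous-hankel-flow}. The right-hand side is continuous in $s$, which gives $C^1$ regularity on each compact subinterval, and hence on the whole of $(s_0,s_1)$.

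The main obstacle I expect is justifying that the enclosed zero set, and therefore the finite index set of the sum, is genuinely locally constant. This requires that zeros cannot enter or exit $\Omega_s$ without lying on $\Gamma_s$ at some parameter, which is precisely the no-crossing hypothesis. A clean way to make this rigorous is via $\mu_0$: by \eqref{eq:count-identity}, $\mu_0$ equals the zero count, and it is a continuous integer-valued function of $s$ as long as $\Gamma_s$ avoids zeros. It is therefore constant on the interval, and combined with continuity of the contours, each individual $\chi_\ell$ stays fixed.

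As an alternative cross-check, I would verify the formula at the level of the moments themselves. Differentiating $\mu_k(s)=\sum n_\ell u_\ell(s)^k$ gives $\dot\mu_k=\sum n_\ell k u_\ell^{k-1}\dot u_\ell$. The $(i,j)$ entry of \eqref{eq:general-continuous-hankel-flow} is $\sum n_\ell\dot u_\ell (i+j)u_\ell^{i+j-1}$, which agrees with $\dot\mu_{i+j}$.
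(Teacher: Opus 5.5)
Your proposal is correct and follows essentially the same route as the paper: with the enclosed zero set fixed, you differentiate each rank-one term $n_\ell v_m(u_\ell)v_m(u_\ell)^T$ in the atomic representation by the chain and product rules and then sum, and your extra finiteness and regularity justifications are sound. One small refinement: constancy of $\mu_0$ shows only that the total count is constant. The constancy of each individual $\chi_\ell$ follows more directly from the winding number of $\Gamma_s$ about each fixed $\tau_\ell$, which is integer-valued and continuous in $s$. The paper itself simply takes this constancy as the meaning of the no-crossing hypothesis.
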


\begin{proof}
On a crossing-free interval, the same zeros remain enclosed.
Differentiating each rank-one term in
\eqref{eq:hankel-flow-atomic} gives
$$
\frac{\mathrm d}{\mathrm ds}
\left[
v_m(u_\ell)v_m(u_\ell)^T
\right]
=
\dot u_\ell
\left[
d_m(u_\ell)v_m(u_\ell)^T
+
v_m(u_\ell)d_m(u_\ell)^T
\right].
$$
Summing over the enclosed zeros proves the result.
\end{proof}

For a general contour-coordinate family,
\eqref{eq:general-continuous-hankel-flow} is already a closed
description in terms of the moving coordinate nodes. For circular
contours, however, the coordinate motion is affine and the equation
reduces to a finite-dimensional Lyapunov-type system.

Consider
\begin{equation}
\Gamma_s
=
\Gamma_{\rho(s)}\bigl(z(s)\bigr),
\qquad
\phi_s(t)
=
\frac{t-z(s)}{\rho(s)},
\label{eq:moving-circular-family}
\end{equation}
where $z(s)\in\mathbb C$, $\rho(s)>0$, and both functions are
continuously differentiable. For every enclosed zero,
$$
u_\ell(s)
=
\frac{\tau_\ell-z(s)}{\rho(s)},
$$
and hence
\begin{equation}
\dot u_\ell(s)
=
-\frac{\dot z(s)}{\rho(s)}
-
\frac{\dot\rho(s)}{\rho(s)}u_\ell(s).
\label{eq:coordinate-node-dynamics}
\end{equation}
Introduce
$$
D_m
=
\begin{pmatrix}
0      &        &        &        & 0\\
1      & 0      &        &        &  \\
0      & 2      & 0      &        &  \\
\vdots & \ddots & \ddots & \ddots &  \\
0      & \cdots & 0      & m-1    & 0
\end{pmatrix},
\qquad
N_m
=
\operatorname{diag}(0,1,\ldots,m-1).
$$
Then
$$
D_mv_m(u)=d_m(u),
\qquad
N_mv_m(u)=u\,d_m(u).
$$
Set
\begin{equation}
\alpha(s)
=
-\frac{\dot z(s)}{\rho(s)},
\qquad
\beta(s)
=
-\frac{\dot\rho(s)}{\rho(s)},
\qquad
A_m(s)
=
\alpha(s)D_m+\beta(s)N_m.
\label{eq:hankel-generator}
\end{equation}
Equation \eqref{eq:coordinate-node-dynamics} then gives
$$
\frac{\mathrm d}{\mathrm ds}
v_m\bigl(u_\ell(s)\bigr)
=
A_m(s)v_m\bigl(u_\ell(s)\bigr).
$$

\begin{theorem}[Continuous circular Hankel flow]
\label{thm:circular-hankel-flow}
Between contour-crossing parameters, the Hankel matrix associated with
\eqref{eq:moving-circular-family} satisfies
\begin{equation}
\frac{\mathrm d}{\mathrm ds}H_m(s)
=
A_m(s)H_m(s)+H_m(s)A_m(s)^T.
\label{eq:lyapunov-hankel-flow}
\end{equation}
\end{theorem}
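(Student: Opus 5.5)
The plan is to combine the atomic representation \eqref{eq:hankel-flow-atomic} with Proposition~\ref{prop:continuous-hankel-evolution}. On a crossing-free interval the indicators $\chi_\ell(s)$ are constant. The matrix $H_m(s)$ is therefore a finite sum of terms
$$
n_\ell\,v_m\bigl(u_\ell(s)\bigr)v_m\bigl(u_\ell(s)\bigr)^T,
$$
one for each zero enclosed throughout the interval. Each node $u_\ell(s)=(\tau_\ell-z(s))/\rho(s)$ is continuously differentiable because $z$ and $\rho$ are $C^1$ and $\rho>0$. So $H_m$ is $C^1$, and its derivative is given by \eqref{eq:general-continuous-hankel-flow}.

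The key step is to rewrite each bracket in \eqref{eq:general-continuous-hankel-flow}, with its factor $\dot u_\ell$, in terms of $A_m(s)$. I first verify the two identities $D_mv_m(u)=d_m(u)$ and $N_mv_m(u)=u\,d_m(u)$ componentwise.

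For $D_m$, the $k$-th row has the single entry $k$ in column $k-1$, so it produces $k u^{k-1}$. For $N_m$, the $k$-th row produces $k u^k = u\cdot k u^{k-1}$.

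Combining these with \eqref{eq:coordinate-node-dynamics}, $\dot u_\ell=\alpha+\beta u_\ell$, gives
$$
\dot u_\ell\, d_m(u_\ell)=\alpha D_m v_m(u_\ell)+\beta N_m v_m(u_\ell)=A_m(s)v_m(u_\ell).
$$
Hence each rank-one term satisfies
$$
\frac{\mathrm d}{\mathrm ds}\bigl[v_m(u_\ell)v_m(u_\ell)^T\bigr]=A_m v_m(u_\ell)v_m(u_\ell)^T+v_m(u_\ell)v_m(u_\ell)^TA_m^T.
$$
Here I use that $A_m$ is a fixed matrix at each $s$, independent of $\ell$. Also, since the transpose is the ordinary one, $(A_mv)^T=v^TA_m^T$ holds without any conjugation.

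Finally, I multiply by $n_\ell$ and sum over the enclosed zeros. Linearity lets $A_m(s)$ and $A_m(s)^T$ factor out of the sum, and \eqref{eq:hankel-flow-atomic} reassembles the sum into $H_m(s)$. This yields \eqref{eq:lyapunov-hankel-flow}.

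There is no real obstacle; the only point needing care is that the generator must be independent of the zero index. This holds because the coordinate change is affine in $t$ with coefficients depending only on $s$. As a result, $\dot u_\ell$ is an affine function of $u_\ell$ with $\ell$-independent coefficients $\alpha(s)$ and $\beta(s)$. For a non-affine coordinate family this factorization would fail, and only \eqref{eq:general-continuous-hankel-flow} would remain.
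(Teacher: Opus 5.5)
Your proposal is correct and follows the paper's proof. Both use the identity $\frac{\mathrm d}{\mathrm ds}v_m(u_\ell)=A_m(s)v_m(u_\ell)$, which comes from $D_mv_m(u)=d_m(u)$, $N_mv_m(u)=u\,d_m(u)$ and the affine node dynamics \eqref{eq:coordinate-node-dynamics}, then differentiate the atomic representation \eqref{eq:hankel-flow-atomic} term by term and factor $A_m(s)$ out of the sum; you additionally check the componentwise identities and the $C^1$ regularity, which the paper takes as given.
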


\begin{proof}
Using the atomic representation and
$$
\frac{\mathrm d}{\mathrm ds}v_m(u_\ell)
=
A_m v_m(u_\ell),
$$
we obtain
\begin{align*}
\dot H_m(s)
&=
\sum_{\tau_\ell\in\Omega_s}n_\ell
\left[
A_m(s)v_m(u_\ell)v_m(u_\ell)^T
+
v_m(u_\ell)v_m(u_\ell)^TA_m(s)^T
\right]
\\
&=
A_m(s)H_m(s)+H_m(s)A_m(s)^T.
\end{align*}
\end{proof}

Let $\Phi_m(s,s_0)$ be the fundamental matrix defined by
\begin{equation}
\frac{\mathrm d}{\mathrm ds}\Phi_m(s,s_0)
=
A_m(s)\Phi_m(s,s_0),
\qquad
\Phi_m(s_0,s_0)=I.
\label{eq:fundamental-matrix}
\end{equation}
Then, on every crossing-free interval,
\begin{equation}
H_m(s)
=
\Phi_m(s,s_0)
H_m(s_0)
\Phi_m(s,s_0)^T.
\label{eq:hankel-flow-congruence}
\end{equation}
Thus the continuous motion acts by congruence. In particular, rank is
preserved between crossings, and inertia is preserved whenever the
flow is real and the Hankel matrices are real symmetric.

For a fixed radius $\rho(s)\equiv\rho$,
$$
A_m(s)
=
-\frac{\dot z(s)}{\rho}D_m,
$$
so
\begin{equation}
\dot H_m(s)
=
-\frac{\dot z(s)}{\rho}
\left[
D_mH_m(s)+H_m(s)D_m^T
\right],
\label{eq:fixed-radius-hankel-flow}
\end{equation}
and, since $D_m$ is nilpotent,
\begin{equation}
\Phi_m(s,s_0)
=
\exp\left(
-\frac{z(s)-z(s_0)}{\rho}D_m
\right).
\label{eq:translation-flow-map}
\end{equation}
The same evolution can be expressed at the moment level as
\begin{equation}
\dot\mu_k(s)
=
k\alpha(s)\mu_{k-1}(s)
+
k\beta(s)\mu_k(s),
\qquad
k\geq1,
\label{eq:moment-dynamical-system}
\end{equation}
together with
\begin{equation}
\dot\mu_0(s)=0.
\label{eq:zeroth-moment-conservation}
\end{equation}
Hence the zeroth moment, and therefore the enclosed zero count, is
constant until a crossing occurs.

\subsection{Crossing impulses and the hybrid Hankel system}

We now turn to the discontinuous part of the evolution. The continuous
flow describes a change of coordinates for a fixed enclosed zero set;
a crossing changes the zero set itself. At the level of the atomic
measure, this means adding or removing a weighted atom, and at the
matrix level it produces a finite-rank impulse.

Let $s_*$ be an isolated crossing parameter, and assume first that
exactly one zero $\tau_*$ crosses $\Gamma_s$ at $s_*$. Define
$$
H_m(s_*^\pm)
:=
\lim_{s\to s_*^\pm}H_m(s),
\qquad
\mu_k(s_*^\pm)
:=
\lim_{s\to s_*^\pm}\mu_k(\Gamma_s;\phi_s),
$$
and assume that the coordinate of the crossing zero has the common
one-sided limit
$$
u_*
:=
\lim_{s\to s_*^-}\phi_s(\tau_*)
=
\lim_{s\to s_*^+}\phi_s(\tau_*).
$$
Let
$$
\varepsilon_*
=
\begin{cases}
+1, & \tau_*\text{ enters }\Omega_s,\\
-1, & \tau_*\text{ leaves }\Omega_s.
\end{cases}
$$

\begin{theorem}[Rank-one contour-crossing law]
\label{thm:rank-one-contour-crossing}
If $\tau_*$ has multiplicity $n_*$, then
\begin{equation}
\mu_k(s_*^+)-\mu_k(s_*^-)
=
\varepsilon_*n_*u_*^k,
\qquad
k\geq0,
\label{eq:moment-jump-crossing}
\end{equation}
and
\begin{equation}
H_m(s_*^+)-H_m(s_*^-)
=
\varepsilon_*n_*
v_m(u_*)v_m(u_*)^T.
\label{eq:rank-one-hankel-impulse}
\end{equation}
\end{theorem}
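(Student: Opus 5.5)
The approach is to work at the level of the residue representation of Lemma~\ref{lem:power-sum} and take one-sided limits at $s_*$. Near $s_*$, but for $s\neq s_*$, the only zero whose membership in $\Omega_s$ changes is $\tau_*$; this follows from the assumption that $s_*$ is an isolated crossing parameter with exactly one crossing zero. So I would first choose $\delta>0$ such that no other zero meets $\Gamma_s$ for $0<|s-s_*|<\delta$. For $s$ in this punctured neighbourhood, the set $Z$ of other enclosed zeros is the same on both sides. Only finitely many zeros lie near the compact set $\overline{\Omega_{s_*}}$, so $Z$ is finite.

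Next I would apply \eqref{eq:power-sum-identity} on each side. For $s\in(s_*-\delta,s_*)$,
\[
\mu_k(\Gamma_s;\phi_s)=\sum_{\tau_\ell\in Z}n_\ell\phi_s(\tau_\ell)^k+\chi_*^-\,n_*\phi_s(\tau_*)^k .
\]
On the other side the same formula holds with $\chi_*^+$ in place of $\chi_*^-$. Here $\chi_*^\pm\in\{0,1\}$ are the constant indicators of $\tau_*$, and by definition of $\varepsilon_*$ they satisfy $\chi_*^+-\chi_*^-=\varepsilon_*$.

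The third step is to pass to the limits $s\to s_*^\pm$. Since $\phi_s(t)$ depends continuously on $s$, each term $\phi_s(\tau_\ell)^k$ with $\tau_\ell\in Z$ has the same limit $\phi_{s_*}(\tau_\ell)^k$ from both sides, so these terms cancel in the difference. The crossing term converges to $u_*^k$ from both sides, using the assumed common one-sided limit $u_*$. Hence the one-sided limits $\mu_k(s_*^\pm)$ exist, and subtracting them gives \eqref{eq:moment-jump-crossing}.

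Finally, \eqref{eq:rank-one-hankel-impulse} follows entrywise. The $(i,j)$ entry of the jump is $\varepsilon_*n_*u_*^{i+j}$, which is exactly the $(i,j)$ entry of $\varepsilon_*n_*v_m(u_*)v_m(u_*)^T$. Equivalently, one can take limits in the atomic form \eqref{eq:hankel-flow-atomic}, exactly as in Proposition~\ref{prop:vander-factorization}.

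The main obstacle is justifying the limits themselves. This requires two things. First, the set of enclosed zeros other than $\tau_*$ must be locally constant in $s$, which follows from isolatedness together with finiteness of the zeros near $\overline{\Omega_{s_*}}$. Second, $\phi_s(\tau_\ell)$ must be continuous in $s$ for each fixed $\tau_\ell$, which follows from the standing continuity assumption on $\phi_s$. I would state both explicitly, noting that the contour integrals themselves are not continuous across $s_*$, because $G$ has a pole on $\Gamma_{s_*}$. This is why the argument goes through the residue sums rather than through the integrals.
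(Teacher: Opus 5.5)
Your proposal is correct and follows the paper's proof. The contributions of zeros that stay enclosed have equal one-sided limits and cancel, the residue representation shows that the atomic measure gains or loses exactly the atom $n_*\delta_{u_*}$, and the Hankel jump then follows entrywise (equivalently from the Vandermonde factorization); you additionally make explicit the local constancy of the enclosed set, the finiteness of the nearby zeros, and the continuity of $\phi_s(\tau_\ell)$ in $s$, which the paper's short argument leaves implicit.
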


\begin{proof}
The contributions of all zeros that remain enclosed have the same
one-sided limits and therefore cancel in the difference. The atomic
measure gains or loses the single weighted atom
$n_*\delta_{u_*}$. The residue representation yields
\eqref{eq:moment-jump-crossing}, and the Vandermonde factorization
then gives \eqref{eq:rank-one-hankel-impulse}.
\end{proof}

Thus an isolated crossing is represented by a signed rank-one matrix.
Its scalar weight records the multiplicity and direction of passage,
while its matrix direction is determined by the crossing coordinate.
For real-axis scans, this rank-one law also separates real crossings
from nonreal conjugate crossings.

\begin{proposition}[Classification of isolated symmetric crossing events]
\label{prop:symmetric-crossing-classification}
Consider fixed-radius circular contours
$$
\Gamma_x=\Gamma_\rho(x),
\qquad
x\in\mathbb R,
$$
equipped with
$$
\phi_x(t)=\frac{t-x}{\rho}.
$$
Assume that a crossing parameter $x_*$ is isolated from all other
conjugacy orbits of zeros.

If a real zero $\gamma$ of multiplicity $n$ crosses the contour,
then
$$
u_*
=
\frac{\gamma-x_*}{\rho}
\in\{-1,1\},
$$
and the jump is a rank-one signed semidefinite matrix.

If a nonreal conjugate pair
$\tau,\overline\tau$, of common multiplicity $n$, crosses the
contour, then both zeros cross at the same parameter. For every
$m\geq2$, the jump has rank two and inertia
\begin{equation}
\operatorname{In}
\bigl(H_m(x_*^+)-H_m(x_*^-)\bigr)
=
(1,1,m-2).
\label{eq:nonreal-crossing-inertia}
\end{equation}
\end{proposition}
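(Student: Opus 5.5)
The plan is to reduce both cases to the rank-one contour-crossing law, Theorem~\ref{thm:rank-one-contour-crossing}, applied to the fixed-radius real-axis scan $x\mapsto(\Gamma_\rho(x),\phi_x)$. The coordinate $\phi_x(t)=(t-x)/\rho$ depends continuously, indeed affinely, on $x$. Hence every crossing coordinate has a common one-sided limit $u_*=(\tau_*-x_*)/\rho$. The isolation hypothesis guarantees that at $x_*$ only the zeros in a single conjugacy orbit lie on $\Gamma_\rho(x_*)$. All other contributions therefore have equal one-sided limits and cancel in $H_m(x_*^+)-H_m(x_*^-)$, exactly as in the proof of Theorem~\ref{thm:rank-one-contour-crossing}.

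\emph{Real zero.} A real zero $\gamma$ lies on $\Gamma_\rho(x_*)$ iff $|\gamma-x_*|=\rho$, that is, $\gamma=x_*\pm\rho$, so $u_*=(\gamma-x_*)/\rho\in\{-1,1\}$. By \eqref{eq:rank-one-hankel-impulse} the jump is $\varepsilon_*n\,v_m(u_*)v_m(u_*)^T$. Since $v_m(u_*)$ is a nonzero real vector (its first entry is $1$), this matrix is real symmetric of rank one. It is positive semidefinite when $\varepsilon_*=+1$ and negative semidefinite when $\varepsilon_*=-1$.

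\emph{Nonreal pair.} For $x\in\mathbb R$ we have $|\overline\tau-x|=|\overline{\tau-x}|=|\tau-x|$. Hence $\tau$ and $\overline\tau$ lie on $\Gamma_\rho(x)$ for exactly the same $x$. Moreover, the sign of $|\tau-x|-\rho$ agrees with that of $|\overline\tau-x|-\rho$ on both sides of $x_*$. So both zeros cross at $x_*$ and in the same direction $\varepsilon_*$. I would then extend the proof of Theorem~\ref{thm:rank-one-contour-crossing} to two simultaneous crossings: the atomic measure gains or loses $n\delta_{u_*}+n\delta_{\overline u_*}$, where $u_*=(\tau-x_*)/\rho$ is nonreal and $\overline u_*=(\overline\tau-x_*)/\rho$. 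The residue representation then gives
\begin{equation*}
H_m(x_*^+)-H_m(x_*^-)=\varepsilon_*n\bigl[v_m(u_*)v_m(u_*)^T+v_m(\overline u_*)v_m(\overline u_*)^T\bigr].
\end{equation*}
The bracket is precisely the Hankel matrix of the conjugation-symmetric measure $n\delta_{u_*}+n\delta_{\overline u_*}$. Corollary~\ref{cor:nonreal-pair-violation} (equivalently, Theorem~\ref{thm:inertia-conjugation-symmetric} with $r_0=0$, $c=1$) therefore gives it inertia $(1,1,m-2)$ for all $m\ge2$. Multiplying by $\varepsilon_*=\pm1$ only interchanges $n_+$ and $n_-$, which are equal here, so \eqref{eq:nonreal-crossing-inertia} follows and the rank is two.

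The main obstacle is the bookkeeping at the crossing rather than any matrix algebra. One must check that a genuine crossing, not a tangential touch, of $\tau$ forces the same sign change for $\overline\tau$; the distance identity above handles this. One must also check that the isolation hypothesis suffices to apply the additive, rank-two version of the crossing law. I would state that version explicitly as a superposition of two rank-one impulses sharing the common sign $\varepsilon_*$.
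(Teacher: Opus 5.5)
Your proposal is correct and takes essentially the same route as the paper. The real case follows from the rank-one crossing law with $u_*=\pm1$, and the nonreal case from the identity $|\tau-x|=|\overline\tau-x|$ for real $x$, a common-sign superposition of the two atoms, and Theorem~\ref{thm:inertia-conjugation-symmetric} with $r_0=0$, $c=1$, where the sign flip leaves $(1,1,m-2)$ unchanged. Your extra checks, namely the common crossing direction and the explicit rank-two superposition, spell out what the paper's proof leaves implicit.
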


\begin{proof}
For a real crossing, the crossing coordinate is real and has modulus
one. Theorem~\ref{thm:rank-one-contour-crossing} therefore gives a
signed rank-one semidefinite matrix.

For a nonreal zero and a real center,
$$
|\tau-x|=|\overline\tau-x|,
$$
so the two members of the conjugate pair cross simultaneously. Their
coordinates are $u_*$ and $\overline u_*$, with
$\operatorname{Im}u_*\neq0$. Up to the common crossing sign, the
jump is the contour Hankel matrix associated with a single nonreal
conjugate pair. Theorem~\ref{thm:inertia-conjugation-symmetric}, with
$r_0=0$, $c=1$, and $r=2$, gives
\eqref{eq:nonreal-crossing-inertia}. Multiplication by $-1$
interchanges the positive and negative subspaces, but the inertia
remains $(1,1,m-2)$.
\end{proof}

\begin{remark}[Dynamical interpretation of RH]
Fix $\rho>0$ and consider a generic real-axis scan by circles of radius $\rho$, with distinct conjugacy orbits crossing separately. The absence of rank-two indefinite events is then equivalent to the absence of nonreal zeros in the horizontal strip
$$
|\operatorname{Im}t|<\rho.
$$
Consequently, RH is equivalent to the absence of rank-two indefinite crossing events for every radius $\rho>0$. Thus the static negative index in Theorem~\ref{thm:inertia-conjugation-symmetric} has a dynamical counterpart: real zeros generate rank-one signed semidefinite events, whereas nonreal conjugate pairs generate rank-two indefinite events whenever the scanning radius reaches them. This is a reformulation, rather than a proof, of RH.
\end{remark}

The single-crossing law extends by superposition. Let
$$
\sigma_1<\sigma_2<\cdots
$$
be the distinct crossing parameters. At $\sigma_j$, suppose that
$p_j$ distinct coordinate nodes cross, and define
\begin{equation}
Q_j
=
\sum_{r=1}^{p_j}
c_{j,r}
v_m(u_{j,r})v_m(u_{j,r})^T,
\qquad
c_{j,r}
=
\varepsilon_{j,r}n_{j,r}\neq0.
\label{eq:crossing-input-matrix}
\end{equation}

\begin{theorem}[Impulsive Hankel dynamical system]
\label{thm:impulsive-hankel-system}
For the circular family \eqref{eq:moving-circular-family}, the contour
Hankel trajectory satisfies, in the sense of matrix-valued
distributions,
\begin{equation}
\frac{\mathrm d}{\mathrm ds}H_m(s)
=
A_m(s)H_m(s)+H_m(s)A_m(s)^T
+
\sum_jQ_j\delta(s-\sigma_j).
\label{eq:impulsive-hankel-dynamics}
\end{equation}
Equivalently, $H_m$ satisfies the continuous equation between event
parameters and the jump conditions
$$
H_m(\sigma_j^+)
=
H_m(\sigma_j^-)+Q_j.
$$
\end{theorem}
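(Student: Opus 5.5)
The plan is to treat \eqref{eq:impulsive-hankel-dynamics} as the statement that $H_m$ is a piecewise $C^1$ matrix function whose classical derivative satisfies the Lyapunov equation off the event set $\{\sigma_j\}$, and whose one-sided limits at each $\sigma_j$ differ by $Q_j$. The distributional derivative of such a function is the classical derivative plus the sum of jumps times Dirac masses. The proof therefore reduces to three facts: the continuous law between events, existence of one-sided limits with the stated jump, and the standard distributional identity for piecewise smooth functions.

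\emph{Continuous law.} On each open interval $(\sigma_j,\sigma_{j+1})$ no zero crosses $\Gamma_s$. Theorem~\ref{thm:circular-hankel-flow} applies there and gives $\dot H_m=A_mH_m+H_mA_m^T$ classically. From \eqref{eq:hankel-flow-congruence}, $H_m(s)=\Phi_m(s,s_0)H_m(s_0)\Phi_m(s,s_0)^T$ on that interval. Since $z,\rho$ are $C^1$ with $\rho>0$, $A_m$ is continuous on the closure of the interval, so $\Phi_m$ extends continuously to the endpoints. Hence the one-sided limits $H_m(\sigma_j^\pm)$ exist. Alternatively, one can read the limits off directly from the atomic formula \eqref{eq:hankel-flow-atomic}, using continuity of $u_\ell(s)=(\tau_\ell-z(s))/\rho(s)$.

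\emph{Jump law.} At $\sigma_j$, I would write $H_m(s)$ for $s$ near $\sigma_j$ by \eqref{eq:hankel-flow-atomic}. The zeros enclosed on both sides contribute terms that are continuous through $\sigma_j$, so they cancel in $H_m(\sigma_j^+)-H_m(\sigma_j^-)$. Only finitely many zeros lie on $\Gamma_{\sigma_j}$, because $\Xi$ is entire and not identically zero and the circle is compact. Each crossing zero $\tau$ has the continuous coordinate $u=\phi_{\sigma_j}(\tau)$, and its indicator changes by $\varepsilon=\pm1$. Grouping crossing zeros with equal coordinate gives the sum defining $Q_j$ in \eqref{eq:crossing-input-matrix}. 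This is exactly Theorem~\ref{thm:rank-one-contour-crossing} applied atom by atom and superposed. Under the circular affine coordinate, distinct zeros have distinct coordinates, so the $u_{j,r}$ are simply the distinct crossing nodes.

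\emph{Distributional identity.} For a test function $\varphi\in C_c^\infty(I)$, I would compute $-\int H_m\dot\varphi\,\mathrm ds$ by splitting $I$ at the finitely many $\sigma_j$ in $\operatorname{supp}\varphi$ and integrating by parts on each piece. The boundary terms telescope to $\sum_j\bigl(H_m(\sigma_j^+)-H_m(\sigma_j^-)\bigr)\varphi(\sigma_j)=\sum_jQ_j\varphi(\sigma_j)$. The interior terms give $\int(A_mH_m+H_mA_m^T)\varphi\,\mathrm ds$.

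The main obstacle is ensuring that the crossing parameters are locally finite and that the classical derivative is integrable up to each event. Local finiteness is where the isolated-crossing hypothesis of the section is used. If zeros could graze the contour at accumulating parameters, I would instead restrict to compact subintervals on which that hypothesis holds. Integrability follows from boundedness of $A_m$ and $H_m$ near each $\sigma_j$, which is immediate from the finite atomic formula. The remaining care is to treat a zero that touches $\Gamma_s$ without crossing. For such a zero the indicator has equal one-sided values, so it contributes nothing to $Q_j$; the definition of $\chi_\ell$ at the single parameter $\sigma_j$ is irrelevant distributionally.
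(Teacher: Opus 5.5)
Your proposal is correct and follows the paper's route. The paper gives no separate proof: it only says the equation combines the circular Lyapunov flow of Theorem~\ref{thm:circular-hankel-flow} between events with the superposed rank-one crossing law of Theorem~\ref{thm:rank-one-contour-crossing}, which is exactly your decomposition, and your integration-by-parts step makes explicit the standard distributional identity for piecewise $C^1$ functions that the paper leaves implicit.
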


This equation combines the two mechanisms derived above. The
continuous congruence flow changes only the coordinate representation,
whereas all changes in the enclosed support occur through the
finite-rank matrices $Q_j$. In particular, the zeroth moment is
constant between events and jumps by the signed total multiplicity at
each event.

\subsection{Drift correction and recovery of crossing events}
\label{subsec:hankel-identification}

The impulsive system separates the dynamics conceptually, but the raw
trajectory still contains both coordinate drift and crossing jumps. We
therefore first remove the known continuous flow. The corrected
trajectory is piecewise constant, so its discontinuities isolate the
event matrices and provide the appropriate data for recovery.

Let $\Phi_m(s,r)$ be the transition matrix generated by $A_m$ and introduced in \eqref{eq:fundamental-matrix}, now written with an arbitrary initial parameter $r$. It satisfies
\begin{align}
\Phi_m(s,r)\Phi_m(r,t)&=\Phi_m(s,t),\\
\Phi_m(s,r)^{-1}&=\Phi_m(r,s).
\end{align}

\begin{theorem}[Event decomposition of the Hankel flow]
\label{thm:hankel-event-decomposition}
Let $s\geq s_0$, and assume that only finitely many crossing events
occur in $[s_0,s]$. Then
\begin{equation}
\begin{split}
H_m(s)
={}&
\Phi_m(s,s_0)H_m(s_0)\Phi_m(s,s_0)^T
\\
&+
\sum_{\sigma_j\in(s_0,s]}
\Phi_m(s,\sigma_j)Q_j\Phi_m(s,\sigma_j)^T.
\end{split}
\label{eq:hankel-event-decomposition}
\end{equation}
\end{theorem}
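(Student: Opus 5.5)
The plan is to prove \eqref{eq:hankel-event-decomposition} by induction on the number of crossing events in $(s_0,s]$. We combine the congruence representation \eqref{eq:hankel-flow-congruence} on crossing-free intervals with the jump conditions $H_m(\sigma_j^+)=H_m(\sigma_j^-)+Q_j$ from Theorem~\ref{thm:impulsive-hankel-system}, and then use the composition property $\Phi_m(s,r)\Phi_m(r,t)=\Phi_m(s,t)$. Throughout we use the right-continuous convention $H_m(\sigma_j)=H_m(\sigma_j^+)$, which is consistent with the sum being over $\sigma_j\in(s_0,s]$. If $s_0$ itself is a crossing parameter, $H_m(s_0)$ is read as $H_m(s_0^+)$.

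Label the events in $(s_0,s]$ as $\sigma_1<\cdots<\sigma_p$, and set $\sigma_0=s_0$. First we extend \eqref{eq:hankel-flow-congruence} up to the closed endpoints of each crossing-free interval $(\sigma_{j-1},\sigma_j)$. The generator $A_m(s)$ depends only on $z,\rho$ and their derivatives, which are continuous across events. Hence $\Phi_m$ is defined and continuous on all of $I$, independently of the crossings. On $(\sigma_{j-1},\sigma_j)$, taking one-sided limits in $H_m(s)=\Phi_m(s,r)H_m(r)\Phi_m(s,r)^T$ for an interior reference point $r$ gives
\[
H_m(\sigma_j^-)=\Phi_m(\sigma_j,\sigma_{j-1})H_m(\sigma_{j-1}^+)\Phi_m(\sigma_j,\sigma_{j-1})^T.
\]
This is the step requiring the most care. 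One must check that the congruence formula passes to the limits at both ends, using the existence of the one-sided limits $H_m(\sigma^\pm)$ assumed before Theorem~\ref{thm:rank-one-contour-crossing} and the continuity of $\Phi_m$.

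Next comes the induction itself. For $p=0$ the claim is exactly \eqref{eq:hankel-flow-congruence}. Assume the formula holds at $s=\sigma_{p-1}$ (right limit). Then
\[
H_m(\sigma_p^+)=\Phi_m(\sigma_p,\sigma_{p-1})H_m(\sigma_{p-1}^+)\Phi_m(\sigma_p,\sigma_{p-1})^T+Q_p.
\]
Substitute the inductive expression for $H_m(\sigma_{p-1}^+)$ and apply the composition law to each term, so that $\Phi_m(\sigma_p,\sigma_{p-1})\Phi_m(\sigma_{p-1},\cdot)=\Phi_m(\sigma_p,\cdot)$. This yields the formula at $\sigma_p$, where the new term $Q_p=\Phi_m(\sigma_p,\sigma_p)Q_p\Phi_m(\sigma_p,\sigma_p)^T$ appears. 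Finally, propagate from $\sigma_p$ to $s$ by the flow on the last crossing-free interval $(\sigma_p,s]$. Composing once more converts every $\Phi_m(\sigma_p,\cdot)$ into $\Phi_m(s,\cdot)$, which gives \eqref{eq:hankel-event-decomposition}.

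An alternative derivation treats \eqref{eq:impulsive-hankel-dynamics} as a linear inhomogeneous matrix ODE and applies variation of constants to $\Phi_m(s_0,s)H_m(s)\Phi_m(s_0,s)^T$. Its derivative vanishes between events and it jumps by $\Phi_m(s_0,\sigma_j)Q_j\Phi_m(s_0,\sigma_j)^T$ at each event. I would include this as a remark, but I would give the induction as the actual proof, since it avoids justifying distributional calculus with the time-dependent coefficient matrix $A_m$.
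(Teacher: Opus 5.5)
Your proposal is correct and follows essentially the same route as the paper. The paper's proof is a short sketch: transport the state by the congruence flow $\Phi_m$ between consecutive events, add $Q_j$ at each $\sigma_j$ through the jump condition, and propagate each increment to $s$ with the composition law $\Phi_m(s,r)\Phi_m(r,t)=\Phi_m(s,t)$. Your induction on the number of events, the extension of the congruence formula to the closed endpoints of each crossing-free interval (valid because $z$ and $\rho$ are continuously differentiable, so $\Phi_m$ is continuous across events), and the right-continuous convention that matches the sum over $(s_0,s]$ simply make that sketch rigorous.
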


\begin{proof}
Between consecutive crossing parameters, the state is transported by
the continuous congruence flow. At $\sigma_j$, it acquires the
increment $Q_j$. Propagating each increment from $\sigma_j$ to
$s$ yields \eqref{eq:hankel-event-decomposition}.
\end{proof}

Fix $s_0$ and define
\begin{equation}
K_m(s)
=
\Phi_m(s,s_0)^{-1}
H_m(s)
\Phi_m(s,s_0)^{-T},
\label{eq:drift-corrected-hankel-state}
\end{equation}
where $X^{-T}=(X^{-1})^T$.

\begin{corollary}[Piecewise-constant drift-corrected state]
\label{cor:piecewise-constant-corrected-state}
The matrix $K_m(s)$ is constant on every crossing-free interval. At
an event parameter $\sigma_j$,
\begin{equation}
K_m(\sigma_j^+)-K_m(\sigma_j^-)
=
\Phi_m(\sigma_j,s_0)^{-1}
Q_j
\Phi_m(\sigma_j,s_0)^{-T}.
\label{eq:corrected-state-jump}
\end{equation}
In particular, rank is preserved by the drift correction. In a real
symmetric flow, inertia is preserved as well.
\end{corollary}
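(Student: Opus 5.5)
The argument rests on the transition-matrix identities for $\Phi_m$ together with the event decomposition of Theorem~\ref{thm:hankel-event-decomposition}, or equivalently the congruence form \eqref{eq:hankel-flow-congruence} on each crossing-free interval. The plan has three steps: show $K_m$ is constant on every crossing-free interval, compute its jump at each event, and deduce the invariance of rank and inertia.

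\textbf{Constancy on crossing-free intervals.} Let $(a,b)$ be a crossing-free interval and fix $r\in(a,b)$. For $s\in(a,b)$, \eqref{eq:hankel-flow-congruence} with initial parameter $r$ gives
$$
H_m(s)=\Phi_m(s,r)H_m(r)\Phi_m(s,r)^T.
$$
Substituting this into \eqref{eq:drift-corrected-hankel-state} and using $\Phi_m(s,s_0)^{-1}=\Phi_m(s_0,s)$ together with the composition rule $\Phi_m(s_0,s)\Phi_m(s,r)=\Phi_m(s_0,r)$, we get
$$
K_m(s)=\Phi_m(s_0,r)H_m(r)\Phi_m(s_0,r)^T,
$$
and the right-hand side does not depend on $s$. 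As a cross-check, one can differentiate $K_m$ directly. From $\partial_s\Phi_m(s,s_0)^{-1}=-\Phi_m(s,s_0)^{-1}A_m(s)$ and the Lyapunov equation \eqref{eq:lyapunov-hankel-flow}, the derivative of $K_m$ cancels to zero. The point needing care is that the composition rule for $\Phi_m$ holds across crossing parameters. This is so because $A_m$ depends only on $z,\rho$, which are $C^1$ on all of $I$, so $\Phi_m$ is a globally defined, continuous, invertible propagator and is unaffected by crossings.

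\textbf{Jump at an event.} Since $\Phi_m(\cdot,s_0)$ is continuous at $\sigma_j$, we can take one-sided limits in \eqref{eq:drift-corrected-hankel-state}:
$$
K_m(\sigma_j^\pm)=\Phi_m(\sigma_j,s_0)^{-1}H_m(\sigma_j^\pm)\Phi_m(\sigma_j,s_0)^{-T}.
$$
Subtracting the two limits and applying the jump condition $H_m(\sigma_j^+)-H_m(\sigma_j^-)=Q_j$ from Theorem~\ref{thm:impulsive-hankel-system} yields \eqref{eq:corrected-state-jump}. Alternatively, one can apply $\Phi_m(s,s_0)^{-1}(\cdot)\Phi_m(s,s_0)^{-T}$ to \eqref{eq:hankel-event-decomposition}. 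The composition rule turns each term into $\Phi_m(s_0,\sigma_j)Q_j\Phi_m(s_0,\sigma_j)^T$, which exhibits $K_m$ as $H_m(s_0)$ plus a sum of constant increments switched on at the $\sigma_j$.

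\textbf{Rank and inertia.} $K_m(s)$ is congruent to $H_m(s)$, and the jump of $K_m$ at $\sigma_j$ is congruent to $Q_j$, in both cases via the nonsingular matrix $\Phi_m(\sigma_j,s_0)^{-1}$ (respectively $\Phi_m(s,s_0)^{-1}$). Congruence by a nonsingular matrix preserves rank, which gives the rank statement. When the flow is real, that is, when $\alpha$ and $\beta$ are real (for instance in real-axis scans), $\Phi_m$ is real. If in addition the matrices are real symmetric, Sylvester's law of inertia gives the inertia statement.

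No step is a real obstacle. The only subtlety is to justify that $\Phi_m$ is continuous and invertible through the event parameters, so that the composition rule and the one-sided limits are legitimate.
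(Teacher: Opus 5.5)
Your proof is correct and takes essentially the same route as the paper, which differentiates $K_m$ using the Lyapunov equation to get $\dot K_m=0$ and obtains the jump by applying the same invertible congruence to $Q_j$. Your integrated version via the composition rule, your explicit remark that $\Phi_m$ is continuous and invertible through the event parameters because $z$ and $\rho$ are $C^1$, and your congruence/Sylvester argument for rank and inertia spell out what the paper leaves implicit.
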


\begin{proof}
Differentiating \eqref{eq:drift-corrected-hankel-state} away from the
crossing parameters and using
$$
\dot H_m=A_mH_m+H_mA_m^T
$$
gives $\dot K_m=0$. The jump formula follows by applying the same
invertible congruence transformation to $Q_j$.
\end{proof}

Hence all crossing information is concentrated in the jumps of
$K_m$. We first consider an isolated single-zero event and write
\begin{equation}
\Delta H_m(s_*)
=
H_m(s_*^+)-H_m(s_*^-).
\label{eq:hankel-jump-definition}
\end{equation}

\begin{theorem}[Exact recovery from an isolated crossing]
\label{thm:single-crossing-recovery}
Suppose that exactly one zero $\tau_*$ crosses the contour at
$s=s_*$, with multiplicity $n_*$, crossing coordinate
$$
u_*=\phi_{s_*}(\tau_*),
$$
and direction $\varepsilon_*\in\{-1,1\}$. If $m\geq2$, then
\begin{equation}
\varepsilon_*n_*
=
\bigl(\Delta H_m(s_*)\bigr)_{00},
\label{eq:recover-signed-multiplicity}
\end{equation}
and
\begin{equation}
u_*
=
\frac{\bigl(\Delta H_m(s_*)\bigr)_{10}}
     {\bigl(\Delta H_m(s_*)\bigr)_{00}}.
\label{eq:recover-crossing-coordinate}
\end{equation}
Consequently,
\begin{equation}
n_*
=
\left|
\bigl(\Delta H_m(s_*)\bigr)_{00}
\right|,
\qquad
\varepsilon_*
=
\operatorname{sign}
\bigl(\Delta H_m(s_*)\bigr)_{00}.
\label{eq:recover-multiplicity-direction}
\end{equation}
\end{theorem}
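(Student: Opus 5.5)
The argument is a direct reading of entries from the rank-one crossing law of Theorem~\ref{thm:rank-one-contour-crossing}. Under the stated hypothesis that exactly one zero $\tau_*$ crosses at $s_*$, that theorem gives
$$
\Delta H_m(s_*)=\varepsilon_*n_*\,v_m(u_*)v_m(u_*)^T,
$$
where $u_*$ is the common one-sided limit of the crossing coordinate. For the circular families under consideration, $\phi_s$ depends continuously on $s$, so this limit equals $\phi_{s_*}(\tau_*)$ as written in the statement. I will note this identification first, since it is the only point needing a remark.

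Next, since $v_m(u)=(1,u,\ldots,u^{m-1})^T$, the $(i,j)$-entry of $v_m(u_*)v_m(u_*)^T$ is $u_*^{i+j}$. Equivalently, one can take the moment form \eqref{eq:moment-jump-crossing} and use the Hankel structure $(\Delta H_m)_{ij}=\mu_{i+j}(s_*^+)-\mu_{i+j}(s_*^-)$. The $(0,0)$-entry is then $\varepsilon_*n_*$, which proves \eqref{eq:recover-signed-multiplicity}. The $(1,0)$-entry is $\varepsilon_*n_*u_*$. This entry exists precisely because $m\ge2$, which is where that hypothesis is used.

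Because $n_*\ge1$ and $\varepsilon_*=\pm1$, the $(0,0)$-entry is nonzero, so dividing gives \eqref{eq:recover-crossing-coordinate}. Finally, since $(\Delta H_m)_{00}$ is a nonzero integer of modulus $n_*$ and sign $\varepsilon_*$, taking absolute value and sign yields \eqref{eq:recover-multiplicity-direction}. There is no real obstacle here; the only care needed is the nonvanishing of the denominator and the identification of the limiting coordinate with $\phi_{s_*}(\tau_*)$.
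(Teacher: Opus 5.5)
Your proposal is correct and follows the paper's proof: both read off the $(0,0)$- and $(1,0)$-entries of the rank-one jump $\varepsilon_*n_*v_m(u_*)v_m(u_*)^T$ from Theorem~\ref{thm:rank-one-contour-crossing}. Your identification of the one-sided limit with $\phi_{s_*}(\tau_*)$ does not need circular families, since it already follows from the standing assumption of Section~\ref{sec:hankel-dynamics} that $\phi_s(t)$ depends continuously on $s$.
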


\begin{proof}
The rank-one crossing law gives
$$
\Delta H_m(s_*)
=
\varepsilon_*n_*
v_m(u_*)v_m(u_*)^T.
$$
Since the first two entries of $v_m(u_*)$ are $1$ and $u_*$, the
$(0,0)$- and $(1,0)$-entries give the stated formulas.
\end{proof}

For a circular family, the recovered coordinate determines the zero
location directly.

\begin{corollary}[Recovery of a crossing zero]
\label{cor:circular-crossing-recovery}
Suppose that
$$
\Gamma_s
=
\Gamma_{\rho(s)}\bigl(z(s)\bigr),
\qquad
\phi_s(t)
=
\frac{t-z(s)}{\rho(s)}.
$$
Under the assumptions of
Theorem~\ref{thm:single-crossing-recovery},
\begin{equation}
\tau_*
=
z(s_*)+\rho(s_*)u_*.
\label{eq:recover-crossing-zero}
\end{equation}
At an exact crossing, $|u_*|=1$, which provides an internal
consistency check.
\end{corollary}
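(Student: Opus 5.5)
The plan is to invert the affine coordinate map \(\phi_{s_*}(t)=(t-z(s_*))/\rho(s_*)\) at the crossing parameter. The recovered coordinate \(u_*\) is, by Theorem~\ref{thm:single-crossing-recovery}, exactly \(u_*=\phi_{s_*}(\tau_*)\). Here the common one-sided limit assumed before Theorem~\ref{thm:rank-one-contour-crossing} is simply \(\phi_{s_*}(\tau_*)\), because \(z\) and \(\rho\) are continuous, so \(\phi_s(\tau_*)\) is continuous in \(s\).

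First I would write out the identity
\[
u_*=\frac{\tau_*-z(s_*)}{\rho(s_*)}.
\]
Since \(\rho(s_*)>0\), I can multiply through by \(\rho(s_*)\) and add \(z(s_*)\), which gives \eqref{eq:recover-crossing-zero}. The right-hand side is computable from the data: \(u_*\) comes from the ratio of the \((1,0)\) and \((0,0)\) entries of \(\Delta H_m(s_*)\) via \eqref{eq:recover-crossing-coordinate}, and \(z(s_*)\) and \(\rho(s_*)\) are known contour parameters.

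For the consistency check, I would use that a crossing at \(s_*\) means \(\tau_*\in\Gamma_{s_*}=\Gamma_{\rho(s_*)}(z(s_*))\). Indeed, \(\tau_*\) lies in \(\Omega_s\) on one side of \(s_*\) and outside \(\overline{\Omega_s}\) on the other. By continuity of \(s\mapsto|\tau_*-z(s)|-\rho(s)\), this quantity changes sign and so vanishes at \(s_*\). Hence \(|\tau_*-z(s_*)|=\rho(s_*)\), that is, \(|u_*|=1\).

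There is no real obstacle here. The only point needing care is justifying that the limiting coordinate equals the value of \(\phi_{s_*}\) at \(s_*\) and that \(\tau_*\) lies on the contour at \(s_*\); both follow from the continuity of \(z(s)\) and \(\rho(s)\).
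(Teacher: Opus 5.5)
Your proof is correct and follows the paper's approach. The paper states this corollary without a separate proof, as the direct inversion of the affine coordinate $u_*=(\tau_*-z(s_*))/\rho(s_*)$, with $|u_*|=1$ because the crossing zero lies on $\Gamma_{\rho(s_*)}(z(s_*))$; your continuity argument makes explicit why $\tau_*$ lies on that circle, which the paper leaves implicit.
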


The same formulas remain stable under a small perturbation of the
event matrix.

\begin{proposition}[Stability of isolated-event recovery]
\label{prop:single-crossing-stability}
Let
$$
Q
=
\varepsilon n\,v_m(u)v_m(u)^T,
\qquad
m\geq2,
$$
be an exact single-event matrix, and suppose that the observed jump is
$$
\widehat Q=Q+E.
$$
Set $\eta=\|E\|_F$, assume $\eta<n$, and define
\begin{equation}
\widehat u
=
\frac{\widehat Q_{10}}{\widehat Q_{00}}.
\label{eq:perturbed-coordinate-recovery}
\end{equation}
Then
\begin{equation}
|\widehat u-u|
\leq
\frac{(1+|u|)\eta}{n-\eta}.
\label{eq:coordinate-stability-bound}
\end{equation}
For a circular crossing, where $|u|=1$,
\begin{equation}
|\widehat u-u|
\leq
\frac{2\eta}{n-\eta}.
\label{eq:circular-coordinate-stability}
\end{equation}
If $\eta\leq n/2$, then
\begin{equation}
|\widehat u-u|
\leq
\frac{4\eta}{n}.
\label{eq:simplified-coordinate-stability}
\end{equation}
Moreover, if $|E_{00}|<1/2$, then the integer multiplicity is
recovered exactly by
\begin{equation}
\widehat n
=
\operatorname{round}\bigl(|\widehat Q_{00}|\bigr).
\label{eq:multiplicity-rounding}
\end{equation}
\end{proposition}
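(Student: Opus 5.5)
The plan is to reduce everything to the two entries $\widehat Q_{00}$ and $\widehat Q_{10}$. Since $Q=\varepsilon n\,v_m(u)v_m(u)^T$ and $v_m(u)$ begins with $1,u$, we have $Q_{00}=\varepsilon n$ and $Q_{10}=\varepsilon n u$. Hence
$$
\widehat Q_{00}=\varepsilon n+E_{00},
\qquad
\widehat Q_{10}=\varepsilon n u+E_{10},
$$
and every entry satisfies $|E_{ij}|\le\|E\|_F=\eta$. In particular $|\widehat Q_{00}|\ge n-\eta>0$ because $\eta<n$, so $\widehat u$ is well defined.

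For the main bound we write the error as one fraction:
$$
\widehat u-u=\frac{\varepsilon n u+E_{10}-u(\varepsilon n+E_{00})}{\widehat Q_{00}}
=\frac{E_{10}-uE_{00}}{\widehat Q_{00}}.
$$
The numerator is at most $|E_{10}|+|u||E_{00}|\le(1+|u|)\eta$, and the denominator is at least $n-\eta$. This gives \eqref{eq:coordinate-stability-bound}. Setting $|u|=1$ gives \eqref{eq:circular-coordinate-stability}. If in addition $\eta\le n/2$, then $n-\eta\ge n/2$, which yields \eqref{eq:simplified-coordinate-stability}.

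A slightly sharper estimate is available via Cauchy--Schwarz: $|E_{10}|+|u||E_{00}|\le\sqrt{1+|u|^2}\,\eta$, using $|E_{00}|^2+|E_{10}|^2\le\eta^2$. The stated, weaker bound is all that is required.

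For the multiplicity, note that
$$
\bigl||\widehat Q_{00}|-n\bigr|\le|\widehat Q_{00}-\varepsilon n|=|E_{00}|<\tfrac12.
$$
Since $n$ is a positive integer, it is the unique integer within distance less than $1/2$ of $|\widehat Q_{00}|$, so rounding returns $n$ exactly. Under the same hypothesis, $\operatorname{sign}(\operatorname{Re}\widehat Q_{00})=\varepsilon$ also recovers the direction, since $n\ge1$.

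There is no real obstacle in this argument. The only points needing care are that $\widehat Q_{00}\neq0$, which is guaranteed by $\eta<n$, and that each single entry of $E$ is bounded by its Frobenius norm.
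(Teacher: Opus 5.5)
Your proof is correct and follows essentially the same route as the paper: you write $\widehat u-u=(E_{10}-uE_{00})/(\varepsilon n+E_{00})$, bound the entries of $E$ by $\eta$, bound the denominator below by $n-\eta$, and obtain exact rounding from $|\widehat Q_{00}-\varepsilon n|=|E_{00}|<1/2$. Your Cauchy--Schwarz sharpening to $\sqrt{1+|u|^2}\,\eta$ and your sign-recovery remark are valid additions that go beyond what the paper states.
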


\begin{proof}
Since
$$
Q_{00}=\varepsilon n,
\qquad
Q_{10}=\varepsilon n u,
$$
we have
$$
\widehat u-u
=
\frac{E_{10}-uE_{00}}{Q_{00}+E_{00}}.
$$
Using $|E_{00}|\leq\eta$ and $|E_{10}|\leq\eta$ gives
$$
|\widehat u-u|
\leq
\frac{|E_{10}|+|u||E_{00}|}{n-|E_{00}|}
\leq
\frac{(1+|u|)\eta}{n-\eta}.
$$
The remaining estimates follow from $|u|=1$ and
$\eta\leq n/2$. The rounding statement follows from
$$
|\widehat Q_{00}-\varepsilon n|<\frac12.
$$
\end{proof}

For a circular event, the reconstructed location
$$
\widehat\tau
=
z(s_*)+\rho(s_*)\widehat u
$$
therefore satisfies
\begin{equation}
|\widehat\tau-\tau_*|
\leq
\rho(s_*)
\frac{2\eta}{n-\eta}.
\label{eq:zero-location-stability}
\end{equation}

The single-event formula uses only two entries of the jump matrix. When
several distinct coordinate nodes cross simultaneously, the full jump
moment sequence is required, and the event can be recovered by a
finite Prony-type reconstruction.

\begin{theorem}[Recovery of simultaneous crossing events]
\label{thm:multiple-crossing-recovery}
Suppose that $p$ distinct coordinate nodes
$$
u_1,\ldots,u_p
$$
cross at the same parameter, with nonzero signed multiplicities
$$
c_j=\varepsilon_jn_j,
\qquad
j=1,\ldots,p.
$$
Assume that coincident coordinate values have been aggregated and that
no aggregated signed weight vanishes. Define
\begin{equation}
y_k
:=
\mu_k^+-\mu_k^-
=
\sum_{j=1}^{p}c_ju_j^k.
\label{eq:jump-moment-sequence}
\end{equation}
If $m\geq p+1$, then the jump matrix
$$
\Delta H_m
=
\bigl(y_{r+s}\bigr)_{r,s=0}^{m-1}
$$
uniquely determines $p$, the nodes
$u_1,\ldots,u_p$, and the signed multiplicities
$c_1,\ldots,c_p$, up to permutation.
\end{theorem}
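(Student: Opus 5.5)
The plan is to treat the jump matrix as a Hankel matrix of a signed finite atomic measure and to copy the static rank and Prony arguments from Proposition~\ref{prop:rank-property}, with the signed diagonal weight matrix $C=\operatorname{diag}(c_1,\ldots,c_p)$ in place of $\widetilde W$. By \eqref{eq:jump-moment-sequence},
\[
\Delta H_m=\widetilde V_mC\widetilde V_m^T,\qquad \widetilde V_m=\bigl[v_m(u_1)\ \cdots\ v_m(u_p)\bigr].
\]
The entries of $\Delta H_m$ supply the moments $y_0,\ldots,y_{2m-2}$. Since $m\ge p+1$, this includes $y_0,\ldots,y_{2p}$.

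\textbf{Step 1 (identify $p$).} Determine $p$ as $\operatorname{rank}\Delta H_m$. The upper bound $\operatorname{rank}\le p$ is immediate from the factorization. For the lower bound, take the leading $p\times p$ block $V_pCV_p^T$. The matrix $V_p$ is a nonsingular square Vandermonde matrix because the nodes are distinct. The matrix $C$ is nonsingular because no aggregated signed weight vanishes. Hence the block is nonsingular and the rank is exactly $p$. In contrast with the positive case, this needs only $c_j\ne0$, not $c_j>0$.

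\textbf{Step 2 (recover the nodes).} Form the monic polynomial $P(u)=\prod_j(u-u_j)=u^p+\sum_{i<p}a_iu^i$ and set $a_p=1$. Then for every $k\ge0$,
\[
\sum_{i=0}^{p} a_i y_{k+i}=\sum_j c_ju_j^kP(u_j)=0 .
\]
Taking $k=0,\ldots,p-1$ gives the linear system $H_p\,a=-(y_p,\ldots,y_{2p-1})^T$, where $H_p=V_pCV_p^T$ is the leading block from Step 1 and $a=(a_0,\ldots,a_{p-1})^T$. This system uses only moments up to $y_{2p-1}$, so it is available since $m\ge p+1$. Because $H_p$ is nonsingular, $a$ is uniquely determined. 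The nodes are then the roots of $P$, determined up to permutation.

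\textbf{Step 3 (recover the weights, and uniqueness).} The weights $c$ are the unique solution of the Vandermonde system $V_p^Tc=(y_0,\ldots,y_{p-1})^T$.

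To make uniqueness rigorous, suppose a second configuration $(p',u'_j,c'_j)$ produced the same $\Delta H_m$. Step 1 forces $p'=p$, since the rank is the same. The same Prony argument then gives the same polynomial $P$, hence the same node set, and then the same weights. A cleaner alternative is to form the combined signed measure, which has at most $2p\le 2m-1$ atoms, and observe that its first $2m-1$ moments vanish. A Vandermonde matrix on at most $2m-1$ distinct nodes has full column rank, so every combined weight must vanish, and the two configurations agree.

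I expect the main obstacle to be the rank argument in Step 1 under indefinite weights. The earlier Proposition~\ref{prop:rank-property} relied on positivity only through the invertibility of $\widetilde W$, so the argument should carry over verbatim once that point is made explicit. The other point to watch is to check that $m\ge p+1$ genuinely provides $y_{2p-1}$, which it does since $2m-2\ge 2p$.
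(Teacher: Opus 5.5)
Your proposal is correct and follows the paper's proof essentially step for step: the signed Vandermonde factorization, rank $p$ from the nonsingular leading $p\times p$ block (which needs only $c_j\neq0$), the annihilating-polynomial system whose coefficient matrix is that same block, and the Vandermonde system for the weights. Your explicit uniqueness comparison is a harmless addition that the paper leaves implicit; like the paper's ``$p$ is determined by the rank'', it tacitly compares only with configurations having at most $m$ nodes, since $2m-1$ generic nodes with suitable nonzero weights reproduce the same $y_0,\ldots,y_{2m-2}$.
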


\begin{proof}
The jump matrix admits the factorization
$$
\Delta H_m
=
V_m\operatorname{diag}(c_1,\ldots,c_p)V_m^T.
$$
Its rank is at most $p$. On the other hand, its leading
$p\times p$ principal block is
$$
\Delta H_p
=
V_p\operatorname{diag}(c_1,\ldots,c_p)V_p^T,
$$
where $V_p$ is a square Vandermonde matrix. Since the nodes are
distinct and the weights are nonzero, both $V_p$ and
$\operatorname{diag}(c_1,\ldots,c_p)$ are nonsingular. Hence
$\Delta H_p$ is nonsingular, and therefore
$$
\operatorname{rank}\Delta H_m=p.
$$
Thus $p$ is determined by the rank.

Let
$$
\pi(\lambda)
=
\prod_{j=1}^{p}(\lambda-u_j)
=
\lambda^p+a_{p-1}\lambda^{p-1}+\cdots+a_0.
$$
The relations $\pi(u_j)=0$ imply
$$
y_{k+p}
+
\sum_{r=0}^{p-1}a_ry_{k+r}
=
0,
\qquad
k\geq0.
$$
For $k=0,\ldots,p-1$, this gives
\begin{equation}
\begin{pmatrix}
y_0&y_1&\cdots&y_{p-1}\\
y_1&y_2&\cdots&y_p\\
\vdots&\vdots&\ddots&\vdots\\
y_{p-1}&y_p&\cdots&y_{2p-2}
\end{pmatrix}
\begin{pmatrix}
a_0\\a_1\\\vdots\\a_{p-1}
\end{pmatrix}
=
-
\begin{pmatrix}
y_p\\y_{p+1}\\\vdots\\y_{2p-1}
\end{pmatrix}.
\label{eq:annihilating-polynomial-system}
\end{equation}
The coefficient matrix is precisely the nonsingular block
$\Delta H_p$. Hence the coefficients of $\pi$ are uniquely
determined, and its roots give the nodes $u_1,\ldots,u_p$.

Finally, the signed multiplicities are recovered from
\begin{equation}
\begin{pmatrix}
1&\cdots&1\\
u_1&\cdots&u_p\\
\vdots&\ddots&\vdots\\
u_1^{p-1}&\cdots&u_p^{p-1}
\end{pmatrix}
\begin{pmatrix}
c_1\\\vdots\\c_p
\end{pmatrix}
=
\begin{pmatrix}
y_0\\\vdots\\y_{p-1}
\end{pmatrix}.
\label{eq:recover-crossing-weights}
\end{equation}
The Vandermonde matrix is invertible because the nodes are distinct.
\end{proof}

\begin{remark}[Conditioning of simultaneous-event recovery]
The conditioning of the reconstruction in Theorem~\ref{thm:multiple-crossing-recovery} is governed by the Hankel, Vandermonde, and annihilating-polynomial systems appearing in the proof. It deteriorates when coordinate nodes approach one another or when a nonzero aggregated signed weight approaches zero. At an exact coordinate collision, the corresponding atoms are no longer separately identifiable from the jump moments.
\end{remark}

The results of this section give a unified interpretation of the
moving-contour Hankel matrix. Between crossings, the local zero
configuration is transported through a continuous congruence flow. At
a crossing, the addition or removal of zero features produces a
finite-rank impulse. After the known drift is removed, these impulses
form an identifiable event process: real and nonreal crossing orbits
have different inertia signatures, and isolated events determine the
direction, multiplicity, coordinate, and, for circular contours, the
location of the crossing zero.

\section{Indicator fields generated by moving contours}
\label{sec:indicator-functionals}

The Hankel dynamics provide a matrix-valued description of the zero
configuration observed by a moving contour. We now introduce scalar
observables that record the zero count, the magnitude of the Hankel
state, and the failure of its real positive-semidefinite signature.

For a contour-coordinate pair $(\Gamma,\phi_\Gamma)$, define the
zero-count observable
\begin{equation}
\mathcal N(\Gamma)
=
\frac{1}{2\pi\mathrm i}
\int_\Gamma G(t)\,\mathrm dt
=
\mu_0(\Gamma;\phi_\Gamma).
\label{eq:contour-zero-count}
\end{equation}
By the argument principle,
$$
\mathcal N(\Gamma)=N_{\Omega_\Gamma}(\Xi)
$$
is the number of zeros in $\Omega_\Gamma$, counted with
multiplicity, and is independent of $\phi_\Gamma$. Its binary
version is
\begin{equation}
\mathcal P(\Gamma)
=
\min\{1,\mathcal N(\Gamma)\}.
\label{eq:contour-count-indicator}
\end{equation}
Thus $\mathcal P(\Gamma)=0$ precisely for a zero-free contour
interior.

The Hankel-energy observable is
\begin{equation}
\mathcal E_m(\Gamma;\phi_\Gamma)
=
\log\left(1+\|H_m(\Gamma;\phi_\Gamma)\|_F\right).
\label{eq:hankel-energy}
\end{equation}
Unlike $\mathcal N$, this quantity depends on the coordinate because
it retains the locations and multiplicities of all enclosed atoms.
For circular contours we therefore use the centered normalized
coordinate $\phi_{z,\rho}(t)=(t-z)/\rho$.

When $H_m(\Gamma;\phi_\Gamma)$ is real symmetric, define the
positive-semidefiniteness violation observable
\begin{equation}
\mathcal V_m(\Gamma;\phi_\Gamma)
=
\max\left\{
0,
-\lambda_{\min}\bigl(H_m(\Gamma;\phi_\Gamma)\bigr)
\right\}.
\label{eq:psd-violation}
\end{equation}
This definition is not extended to a general complex symmetric matrix
by taking a Hermitian part, since that would change the matrix object
studied in the preceding sections.

\begin{proposition}[Evolution of the indicator observables]
\label{prop:indicator-evolution}
Let $s\mapsto(\Gamma_s,\phi_s)$ be a moving contour-coordinate pair
with isolated crossing parameters $s_j$. Write
$$
\mathcal N(s)=\mathcal N(\Gamma_s),
\qquad
\mathcal E_m(s)=\mathcal E_m(\Gamma_s;\phi_s).
$$
Between crossings,
\begin{equation}
\frac{\mathrm d}{\mathrm ds}\mathcal N(s)=0.
\label{eq:count-conservation}
\end{equation}
If a zero of multiplicity $n_j$ enters or leaves at $s_j$, then
\begin{equation}
\mathcal N(s_j^+)-\mathcal N(s_j^-)
=
\varepsilon_j n_j,
\label{eq:count-jump}
\end{equation}
where $\varepsilon_j=1$ for entry and $\varepsilon_j=-1$ for
exit.

Suppose that $H_m(s)\neq0$ and
$$
\dot H_m(s)
=
A_m(s)H_m(s)+H_m(s)A_m(s)^T
$$
between crossings. Then
\begin{equation}
\frac{\mathrm d}{\mathrm ds}\mathcal E_m(s)
=
\frac{
\operatorname{Re}
\left\langle
H_m(s),
A_m(s)H_m(s)+H_m(s)A_m(s)^T
\right\rangle_F
}{
\bigl(1+\|H_m(s)\|_F\bigr)\|H_m(s)\|_F
},
\label{eq:hankel-energy-evolution}
\end{equation}
where
$$
\langle X,Y\rangle_F
=
\operatorname{tr}(X^*Y).
$$
At an isolated crossing with coordinate $u_j$,
\begin{equation}
\mathcal E_m(s_j^+)-\mathcal E_m(s_j^-)
=
\log
\frac{
1+
\left\|
H_m(s_j^-)
+
\varepsilon_j n_jv_m(u_j)v_m(u_j)^T
\right\|_F
}{
1+\|H_m(s_j^-)\|_F
}.
\label{eq:hankel-energy-jump}
\end{equation}
\end{proposition}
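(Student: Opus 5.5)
The statement has three parts: the behaviour of $\mathcal N$ between crossings and at crossings, the derivative of $\mathcal E_m$ along the continuous flow, and the jump of $\mathcal E_m$ at an isolated crossing. Each part will be reduced to a result already proved.

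\emph{Zero count.} Since $\mathcal N(s)=\mu_0(\Gamma_s;\phi_s)$ by \eqref{eq:contour-zero-count}, Lemma~\ref{lem:power-sum} and \eqref{eq:count-identity} give $\mathcal N(s)=\sum_{\tau_\ell\in\Omega_s}n_\ell$. On a crossing-free interval the enclosed set is fixed, so $\mathcal N$ is constant and \eqref{eq:count-conservation} holds. For circular families this is also \eqref{eq:zeroth-moment-conservation}. The jump \eqref{eq:count-jump} is the case $k=0$ of \eqref{eq:moment-jump-crossing} in Theorem~\ref{thm:rank-one-contour-crossing}, using $u_*^0=1$.

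\emph{Energy derivative.} Write $f(s)=\|H_m(s)\|_F^2=\langle H_m,H_m\rangle_F$. By Proposition~\ref{prop:continuous-hankel-evolution} (or Theorem~\ref{thm:circular-hankel-flow}), $H_m$ is $C^1$ between crossings. Differentiating the sesquilinear form gives
$$
\dot f=\langle\dot H_m,H_m\rangle_F+\langle H_m,\dot H_m\rangle_F=2\operatorname{Re}\langle H_m,\dot H_m\rangle_F .
$$
Since $H_m\neq0$, the norm $\|H_m\|_F=f^{1/2}$ is differentiable with derivative $\dot f/(2\|H_m\|_F)$. The chain rule for $\log(1+\cdot)$ then yields
$$
\dot{\mathcal E}_m=\frac{\operatorname{Re}\langle H_m,\dot H_m\rangle_F}{(1+\|H_m\|_F)\|H_m\|_F}.
$$
Substituting the Lyapunov right-hand side for $\dot H_m$ gives \eqref{eq:hankel-energy-evolution}.

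The only delicate point in this part is the hypothesis $H_m\neq0$, which is needed for differentiability of the norm. This is where I expect the one real check. By Proposition~\ref{prop:zero-free-iff}, $H_m\neq0$ exactly when the interior contains a zero. In that case the norm is smooth, since $f>0$ and $\sqrt{\cdot}$ is smooth on $(0,\infty)$. If $H_m\equiv0$ on the interval, then $\mathcal E_m\equiv0$ there, though the stated formula is not needed in that case.

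\emph{Energy jump.} The argument requires that both one-sided limits of $H_m$ exist. Continuity of the Frobenius norm and of $\log$ then lets us pass to the limits, so
$$
\mathcal E_m(s_j^\pm)=\log\bigl(1+\|H_m(s_j^\pm)\|_F\bigr).
$$
Theorem~\ref{thm:rank-one-contour-crossing} gives
$$
H_m(s_j^+)=H_m(s_j^-)+\varepsilon_j n_j v_m(u_j)v_m(u_j)^T.
$$
Substituting this and taking the difference of logarithms produces \eqref{eq:hankel-energy-jump}.
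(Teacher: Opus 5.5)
Your proposal is correct and follows the same route as the paper: $\mathcal N$ is the zeroth moment, which is conserved between crossings and jumps according to the $k=0$ case of the crossing law. The energy derivative comes from $\frac{\mathrm d}{\mathrm ds}\frac12\|H_m\|_F^2=\operatorname{Re}\langle H_m,\dot H_m\rangle_F$ and the chain rule, and the energy jump from the rank-one crossing law. Your version simply supplies details the paper leaves implicit, namely differentiability of $\|H_m\|_F$ where $H_m\neq0$ and passage to the one-sided limits.
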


\begin{proof}
The count is the zeroth moment and is unchanged while the enclosed zero
set is fixed; its jump is the signed multiplicity of the crossing
zero. For the energy,
$$
\frac{\mathrm d}{\mathrm ds}
\frac12\|H_m(s)\|_F^2
=
\operatorname{Re}\langle H_m(s),\dot H_m(s)\rangle_F.
$$
Combining this identity with \eqref{eq:hankel-energy} gives
\eqref{eq:hankel-energy-evolution}, and the rank-one crossing law gives
\eqref{eq:hankel-energy-jump}.
\end{proof}

The binary field $\mathcal P(\Gamma_s)$ changes only when the zero
count passes between zero and a positive value. For a real symmetric
Hankel flow, inertia is preserved between crossings by the continuous
congruence transformation. Hence
$$
\mathcal V_m(\Gamma_{s_0};\phi_{s_0})=0
\quad\Longleftrightarrow\quad
\mathcal V_m(\Gamma_s;\phi_s)=0
$$
throughout a crossing-free interval, although the numerical value of
$\mathcal V_m$ need not be constant when it is positive.

Let $\mathcal Z(\Xi)$ denote the set of distinct zeros of $\Xi$, and let $n(\tau)$ be the multiplicity of $\tau\in\mathcal Z(\Xi)$. For circular contours, define the center-dependent fields
\begin{equation}
\mathcal N_\rho(z)
=
\mathcal N\bigl(\Gamma_\rho(z)\bigr),
\qquad
\mathcal P_\rho(z)
=
\mathcal P\bigl(\Gamma_\rho(z)\bigr),
\label{eq:circular-count-fields}
\end{equation}
\begin{equation}
\mathcal E_{m,\rho}(z)
=
\mathcal E_m\bigl(\Gamma_\rho(z);\phi_{z,\rho}\bigr),
\label{eq:circular-energy-field}
\end{equation}
and, for real centers producing real symmetric matrices,
\begin{equation}
\mathcal V_{m,\rho}(x)
=
\mathcal V_m\bigl(\Gamma_\rho(x);\phi_{x,\rho}\bigr).
\label{eq:circular-psd-field}
\end{equation}
For every admissible center,
\begin{equation}
\mathcal N_\rho(z)
=
\sum_{\substack{\tau\in\mathcal Z(\Xi)\\|\tau-z|<\rho}}
n(\tau),
\label{eq:circular-local-count}
\end{equation}
and therefore
\begin{equation}
\mathcal P_\rho(z)=1
\quad\Longleftrightarrow\quad
z\in
\bigcup_{\tau\in\mathcal Z(\Xi)}D_\rho(\tau),
\label{eq:count-active-region}
\end{equation}
with the boundary circles excluded from the admissible parameter set.
Thus a nonzero response at the center $z$ means that the disk
centered at $z$ contains a zero; it does not mean that $z$ itself
is a zero.

The energy field provides coordinate-dependent information beyond the zero count because it varies with the normalized coordinates and multiplicities of the enclosed zeros. The
PSD-violation field has a different interpretation. If the number of
distinct coordinate nodes in a real-centered disk is $r$, then
Corollary~\ref{cor:local-real-support} gives
\begin{equation}
\mathcal V_{m,\rho}(x)=0
\quad\Longleftrightarrow\quad
\operatorname{supp}\nu_{\Gamma_\rho(x)}\subset\mathbb R,
\qquad
m\geq r.
\label{eq:psd-field-complete-order}
\end{equation}
For $m<r$, a positive value still certifies a nonreal coordinate
configuration, but a zero value alone is inconclusive.

\section{Numerical implementation and computational validation}
\label{sec:numerics}

The numerical study examines three computational components of the
framework: the accurate evaluation of contour Hankel matrices, the
static information carried by the indicator fields, and the recovery
of a zero from a discretely sampled moving-contour trajectory. The
experiments are limited to representative tests in order to keep the
numerical section proportional to the theoretical development.

\paragraph{Contour quadrature and accuracy.}
For a circular contour
$$
t(\theta)
=
z+\rho e^{\mathrm i\theta},
\qquad
0\leq\theta<2\pi,
$$
the normalized coordinate satisfies
$$
\phi_{z,\rho}(t)
=
\frac{t-z}{\rho}
=
e^{\mathrm i\theta},
\qquad
t\in\Gamma_\rho(z).
$$
Hence
\begin{equation}
\mu_k(z,\rho)
=
\frac{\rho}{2\pi}
\int_0^{2\pi}
e^{\mathrm i(k+1)\theta}
G\bigl(z+\rho e^{\mathrm i\theta}\bigr)
\,\mathrm d\theta,
\label{eq:numerical-circular-moment}
\end{equation}
and the periodic trapezoidal rule gives
\begin{equation}
\mu_k^{(N)}(z,\rho)
=
\frac{\rho}{N}
\sum_{j=0}^{N-1}
e^{\mathrm i(k+1)\theta_j}
G\bigl(z+\rho e^{\mathrm i\theta_j}\bigr),
\qquad
\theta_j=\frac{2\pi j}{N}.
\label{eq:trapezoidal-contour-moment}
\end{equation}

The logarithmic derivative is evaluated without numerical
differentiation. With
$$
w=\frac12+\mathrm i t,
$$
we use
\begin{equation}
\frac{\Xi'(t)}{\Xi(t)}
=
\mathrm i
\left(
\frac1w
+
\frac1{w-1}
-
\frac12\log\pi
+
\frac12\psi\left(\frac{w}{2}\right)
+
\frac{\zeta'(w)}{\zeta(w)}
\right),
\label{eq:xi-log-derivative-evaluation}
\end{equation}
where $\psi$ is the digamma function. The special functions are
evaluated in arbitrary-precision arithmetic, after which the small
Hankel matrices are assembled and analyzed in double precision. For reference quantities obtained from the residue representation, set
$$
\boldsymbol\mu^{(N)}
=
\bigl(\mu_0^{(N)},\ldots,\mu_{2m-2}^{(N)}\bigr)^T,
\qquad
\boldsymbol\mu
=
\bigl(\mu_0,\ldots,\mu_{2m-2}\bigr)^T.
$$
We then define
\begin{equation}
E_\mu
=
\frac{\|\boldsymbol\mu^{(N)}-\boldsymbol\mu\|_2}{\|\boldsymbol\mu\|_2},
\qquad
E_H
=
\frac{\|H_m^{(N)}-H_m\|_F}{\|H_m\|_F}.
\label{eq:numerical-error-measures}
\end{equation}

We first isolate the quadrature error using a finite atomic model with
one simple zero. We take $\rho=0.5$, $m=4$, and place the zero
inside the circle at distances
$$
d=0.20,
\qquad
d=0.10,
\qquad
d=0.05
$$
from the contour. The exact moments are then known explicitly. The
solid curves in \Cref{fig:quadrature-convergence-compact} show that,
for $d=0.20$, the Hankel error reaches the double-precision level at
$N=64$. At the same resolution, the errors for $d=0.10$ and
$d=0.05$ are approximately $6.3\times10^{-7}$ and
$1.2\times10^{-3}$, respectively. Increasing the resolution to
$N=128$ reduces the latter error to approximately
$1.4\times10^{-6}$.

We then repeat the safe-distance test using the first positive zero
$$
\gamma_1\approx14.134725141734695
$$
of $\Xi$. The circle has radius $\rho=0.5$, contains only
$\gamma_1$, and has zero-to-contour distance $d=0.20$. The dashed
curve in \Cref{fig:quadrature-convergence-compact} and the values in
\Cref{tab:xi-quadrature-validation} show that the direct
$\Xi'/\Xi$ computation agrees with the atomic reference and reaches
the double-precision assembly level at $N=64$.

\begin{figure}[htbp]
\centering
\includegraphics[width=0.76\textwidth]
{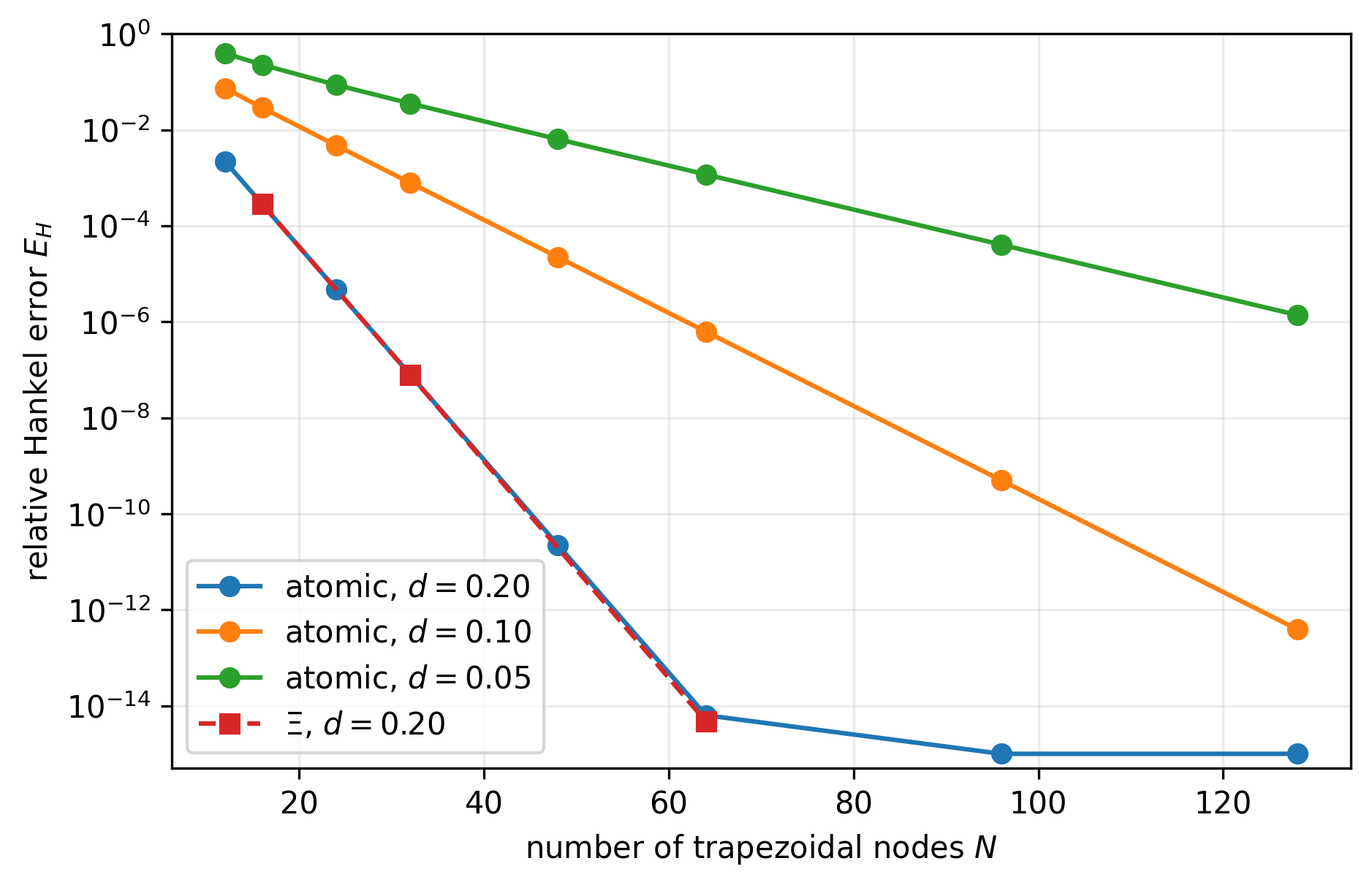}
\caption{Relative Hankel error for circular contour quadrature. The
solid curves correspond to an exact finite atomic model, while the
dashed curve is obtained by direct evaluation of $\Xi'/\Xi$.
Convergence deteriorates as the nearest zero approaches the
integration path.}
\label{fig:quadrature-convergence-compact}
\end{figure}

\begin{table}[htbp]
\centering
\caption{Direct quadrature validation for the first positive zero of
$\Xi$, with $\rho=0.5$, $d=0.20$, and $m=4$.}
\label{tab:xi-quadrature-validation}
\begin{tabular}{rcc}
\toprule
$N$ & $E_\mu$ & $E_H$\\
\midrule
16 & $2.822\times10^{-4}$ & $2.822\times10^{-4}$\\
32 & $7.959\times10^{-8}$ & $7.959\times10^{-8}$\\
64 & $5.496\times10^{-15}$ & $4.759\times10^{-15}$\\
\bottomrule
\end{tabular}
\end{table}

In these calculations, the distance from the nearest zero to the contour is the dominant resolution parameter. The quadrature order in
the moving-contour test is therefore increased as a sampled contour
approaches a crossing event.

\paragraph{Indicator fields and Hankel signatures.}
Using the exact atomic representation of the first three positive
zeros of $\Xi$, we evaluate the fields
$$
\mathcal N_\rho(z),
\qquad
\mathcal E_{m,\rho}(z),
\qquad
\mathcal V_{m,\rho}(x)
$$
defined in Section~\ref{sec:indicator-functionals}. The count and
energy fields are computed on a grid of complex centers, while the PSD
violation is evaluated along the real center axis. The results are
shown in \Cref{fig:indicator-fields-compact}. The count field is
constant on each region determined by the crossing circles. The energy
field varies within an active region because it retains the normalized
coordinates of the enclosed zeros. Along the real axis, the PSD field
remains at the numerical zero level for the tested all-real
configuration.

\begin{figure}[htbp]
\centering
\begin{subfigure}{0.32\textwidth}
\centering
\includegraphics[width=\textwidth]
{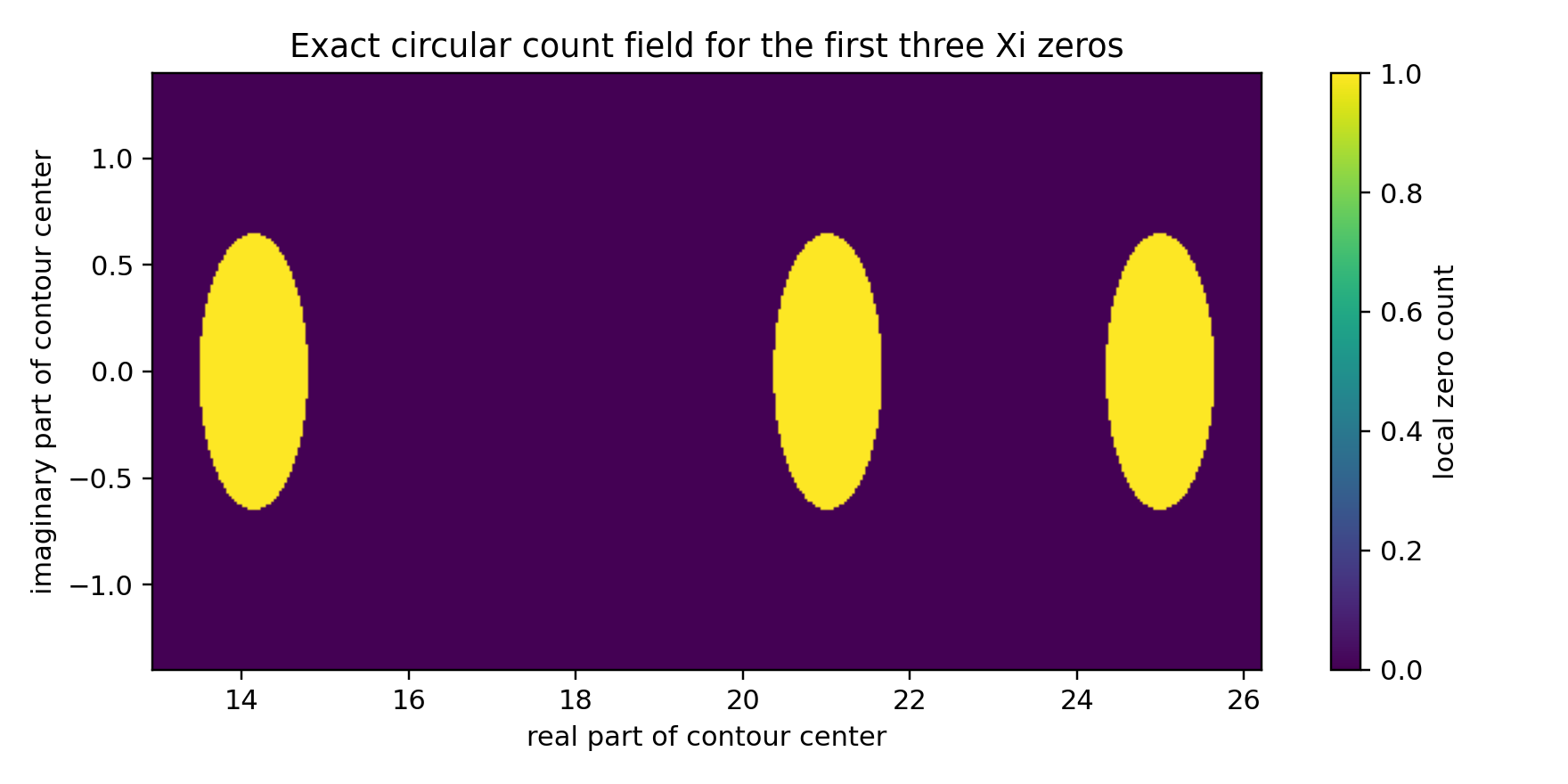}
\caption{Count field.}
\end{subfigure}
\hfill
\begin{subfigure}{0.32\textwidth}
\centering
\includegraphics[width=\textwidth]
{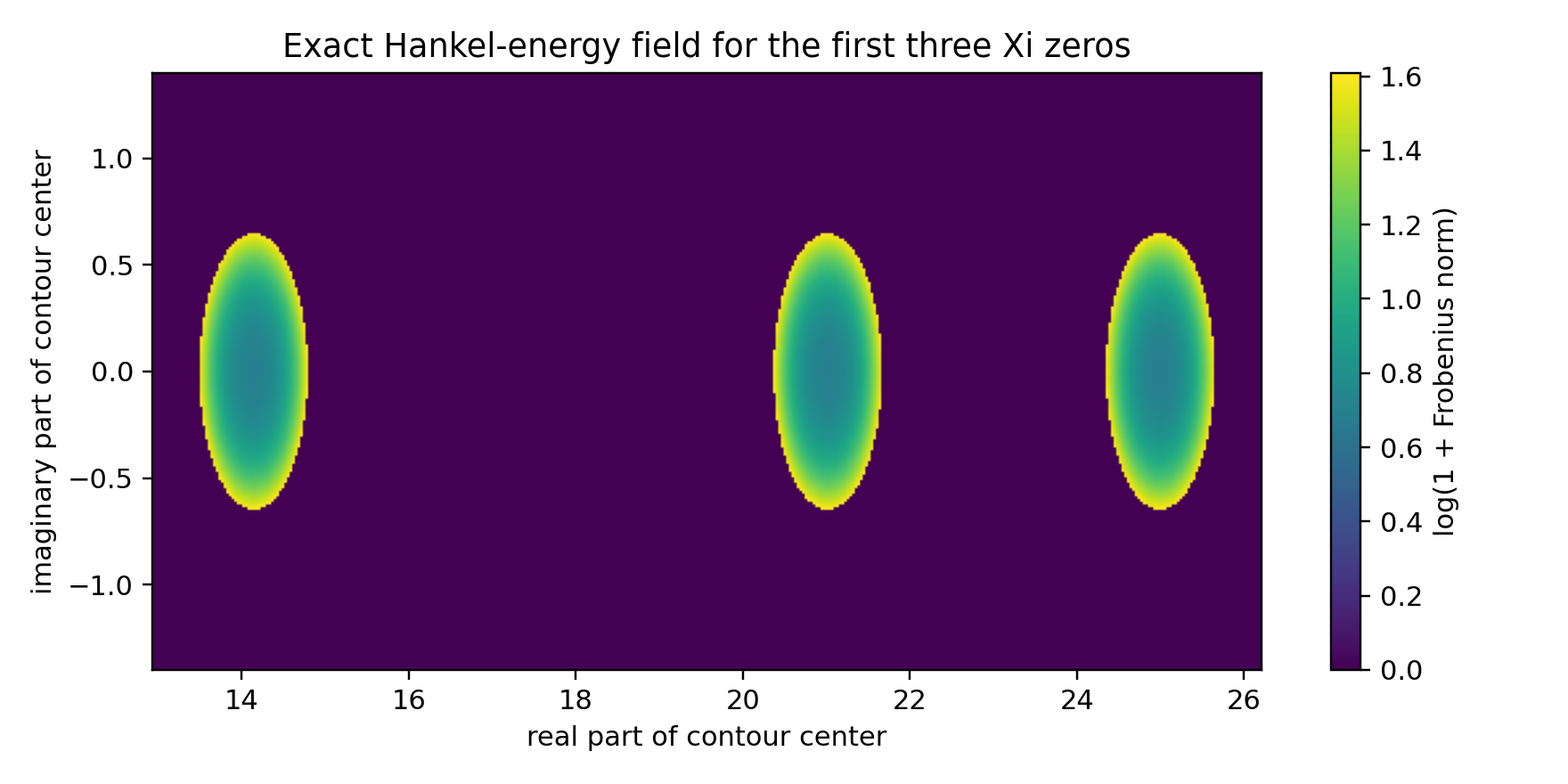}
\caption{Hankel-energy field.}
\end{subfigure}
\hfill
\begin{subfigure}{0.32\textwidth}
\centering
\includegraphics[width=\textwidth]
{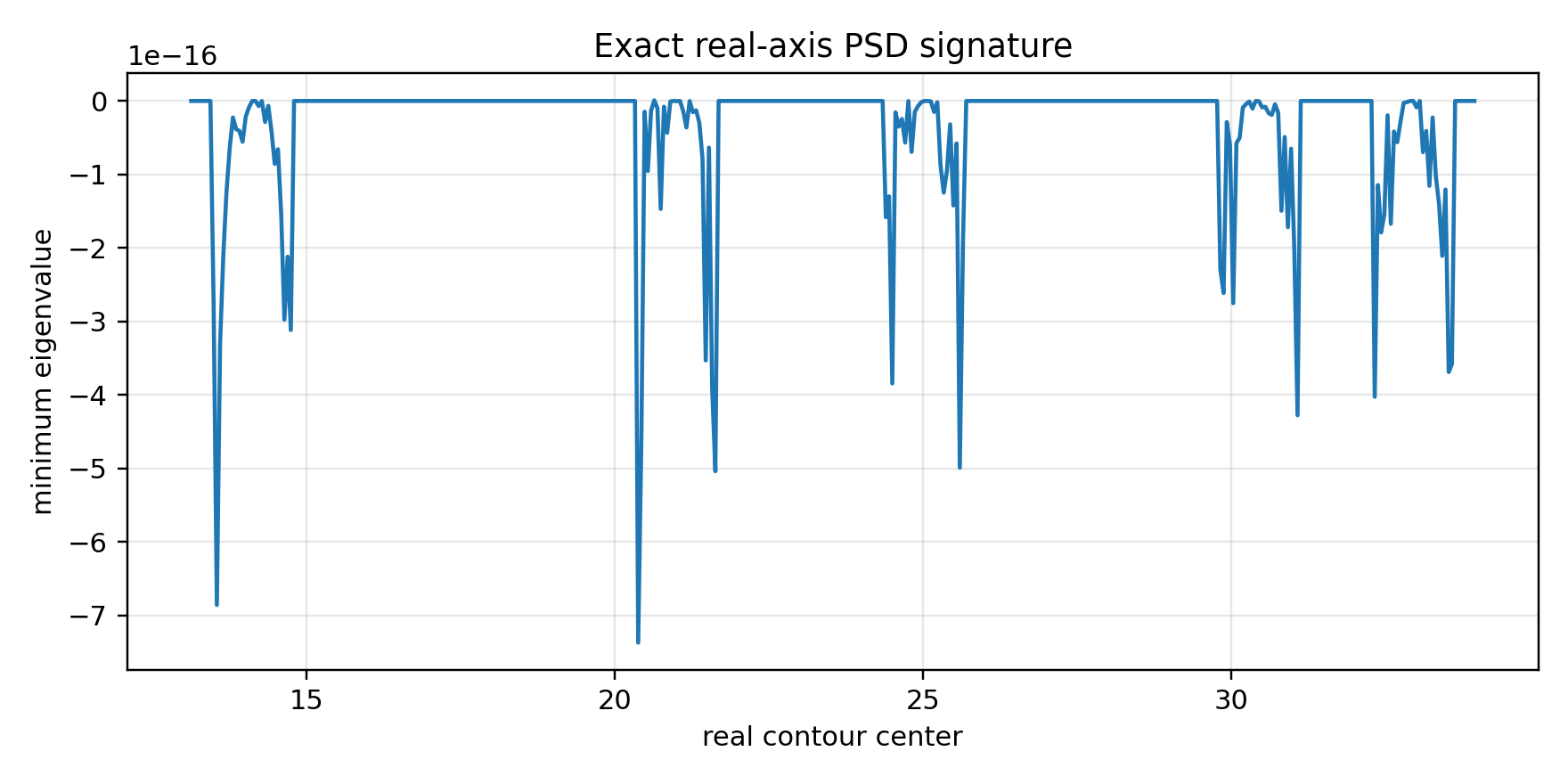}
\caption{Real-axis PSD violation.}
\end{subfigure}
\caption{Count, Hankel-energy, and PSD-violation fields generated from
known positive zeros of $\Xi$.}
\label{fig:indicator-fields-compact}
\end{figure}

Selected centers are also recomputed directly from $\Xi'/\Xi$. For
off-real strips with
$$
|\operatorname{Im}z|=1,
\ 1.5,
\ 2,
\qquad
\rho=0.65,
$$
the maximum computed Hankel norms are approximately
$$
3.3\times10^{-14},
\qquad
4.9\times10^{-23},
\qquad
9.4\times10^{-30},
$$
respectively, consistently with the empty-contour criterion.

For a real-centered circle containing the first two positive zeros,
the exact $5\times5$ centered normalized Hankel matrix has rank two.
At $N=32$, insufficient quadrature resolution produces a spurious
negative eigenvalue of size $7.5\times10^{-8}$. At $N=64$, the
minimum eigenvalue is $-6.4\times10^{-16}$, and the numerical rank
is correctly identified as two. These tests show that small negative eigenvalues must be
interpreted relative to quadrature accuracy.

The centered normalized coordinate is also important for numerical scaling. For a rank-$r$ Hankel matrix, we report the effective condition number
$$
\kappa_{\mathrm{eff}}(H_m)
=
\frac{\sigma_1(H_m)}{\sigma_r(H_m)},
$$
which excludes the exact zero singular values caused by rank deficiency. For the twentieth and twenty-first positive zeros, the raw-coordinate Hankel matrices have effective condition numbers of approximately $3.1\times10^7$--$3.3\times10^7$, and their largest entries grow to approximately $2.5\times10^{34}$ at $m=10$. After centering and normalization, the effective condition number remains below approximately $2.1$, and the largest entry is $2$ in the same test. Thus affine normalization preserves the exact rank and inertia while substantially improving numerical scaling.

\paragraph{Continuous transport and event recovery.}
We finally test the complete moving-contour mechanism for the same zero
$\gamma_1$. The contour radius is fixed at $\rho=0.6$, the center
$x$ moves along the real axis, and $m=4$. The entering event occurs
at
$$
x_*=\gamma_1-\rho.
$$

On a crossing-free interval beginning at $x_0=x_*+0.20$, the initial
matrix is computed by direct contour quadrature and propagated by
\begin{equation}
H_m^{\mathrm{flow}}(x)
=
\Phi_m(x,x_0)
H_m(x_0)
\Phi_m(x,x_0)^T.
\label{eq:numerical-flow-prediction}
\end{equation}
We compare it with an independently recomputed contour matrix through
\begin{equation}
R_{\mathrm{flow}}(x)
=
\frac{
\|H_m^{\mathrm{quad}}(x)-H_m^{\mathrm{flow}}(x)\|_F
}{
1+\|H_m^{\mathrm{quad}}(x)\|_F
}.
\label{eq:numerical-flow-residual}
\end{equation}
For center displacements between $0$ and $0.20$, with $N=128$,
we obtain
$$
\max_x R_{\mathrm{flow}}(x)
=
1.71\times10^{-15}.
$$
Thus independently computed contour states satisfy the continuous
congruence law to the double-precision level.

To test event recovery without assuming prior knowledge of $x_*$,
let $[x_L,x_R]$ be a sampled interval of length
$h=x_R-x_L$ in which the rounded zeroth moment changes from zero to
one, and set $x_c=(x_L+x_R)/2$. Transporting the endpoint matrices to
the common midpoint coordinate gives
$$
\widetilde H_L
=
\Phi_m(x_c,x_L)H_m(x_L)\Phi_m(x_c,x_L)^T,
$$
$$
\widetilde H_R
=
\Phi_m(x_c,x_R)H_m(x_R)\Phi_m(x_c,x_R)^T,
$$
and the drift-corrected jump is
\begin{equation}
\widehat Q
=
\widetilde H_R-
\widetilde H_L.
\label{eq:numerical-transported-jump}
\end{equation}
For a single entering zero, the multiplicity, midpoint coordinate, and
zero location are recovered from
\begin{equation}
\widehat n
=
\widehat Q_{00},
\qquad
\widehat u
=
\frac{\widehat Q_{10}}{\widehat Q_{00}},
\qquad
\widehat\tau
=
x_c+\rho\widehat u.
\label{eq:numerical-event-recovery}
\end{equation}
These formulas do not use an estimate of the exact crossing position
inside the sampled interval.

For benchmarking, the true crossing is placed at $37\%$ of the
interval, so the midpoint bracketing error is $0.13h$. As $h$ is
reduced, the quadrature order is increased from $128$ to $512$.
Let
$$
E_Q
=
\frac{\|\widehat Q-Q\|_F}{\|Q\|_F},
\qquad
E_n
=
|\widehat n-1|,
\qquad
E_\tau
=
|\widehat\tau-\gamma_1|,
$$
where $Q$ is the exact rank-one jump represented in the midpoint
coordinate. The results are summarized in
\Cref{fig:dynamic-validation-compact,tab:event-recovery-compact}.

\begin{figure}[htbp]
\centering
\begin{subfigure}{0.48\textwidth}
\centering
\includegraphics[width=\textwidth]
{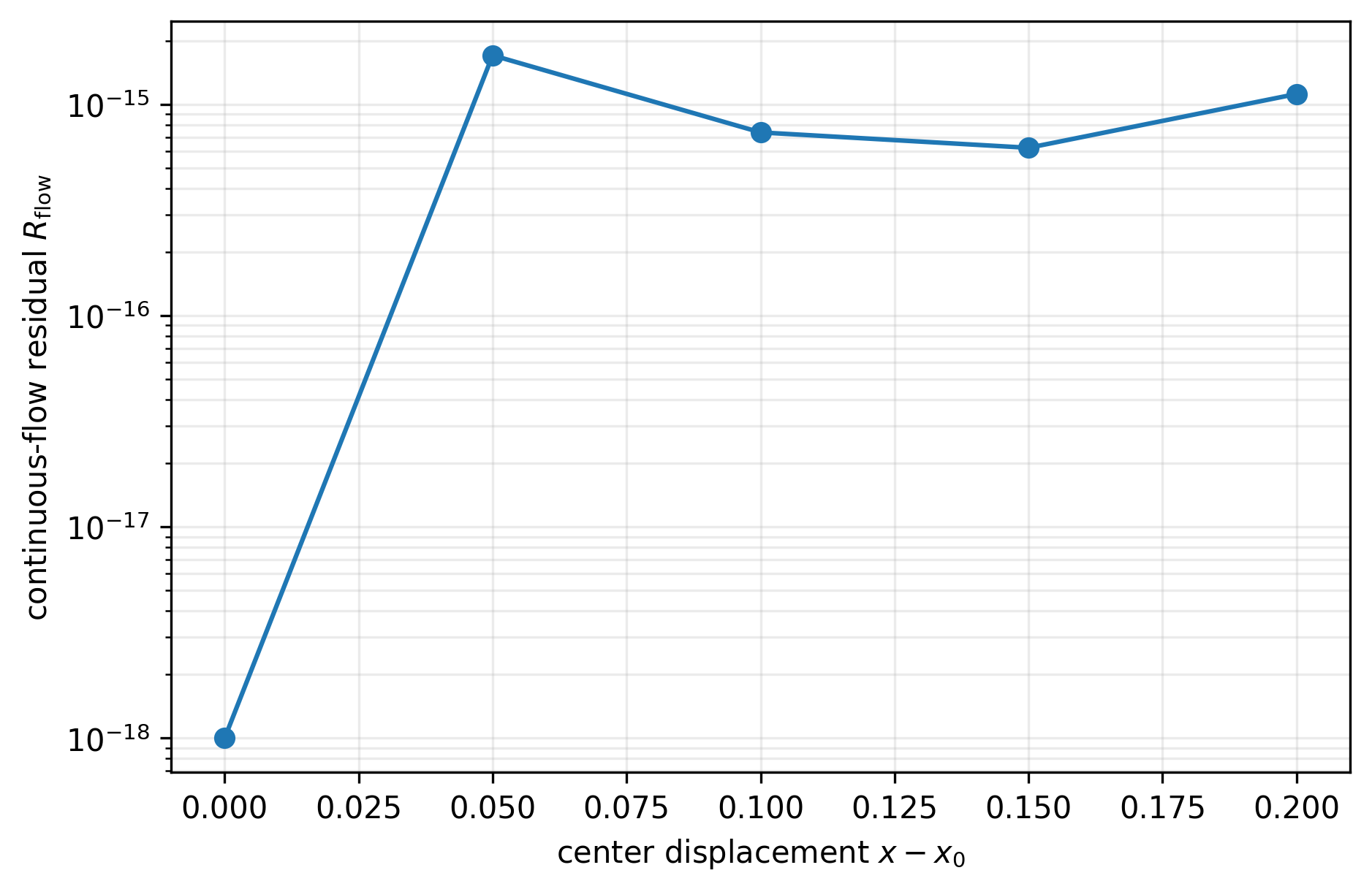}
\caption{Continuous-flow residual.}
\end{subfigure}
\hfill
\begin{subfigure}{0.48\textwidth}
\centering
\includegraphics[width=\textwidth]
{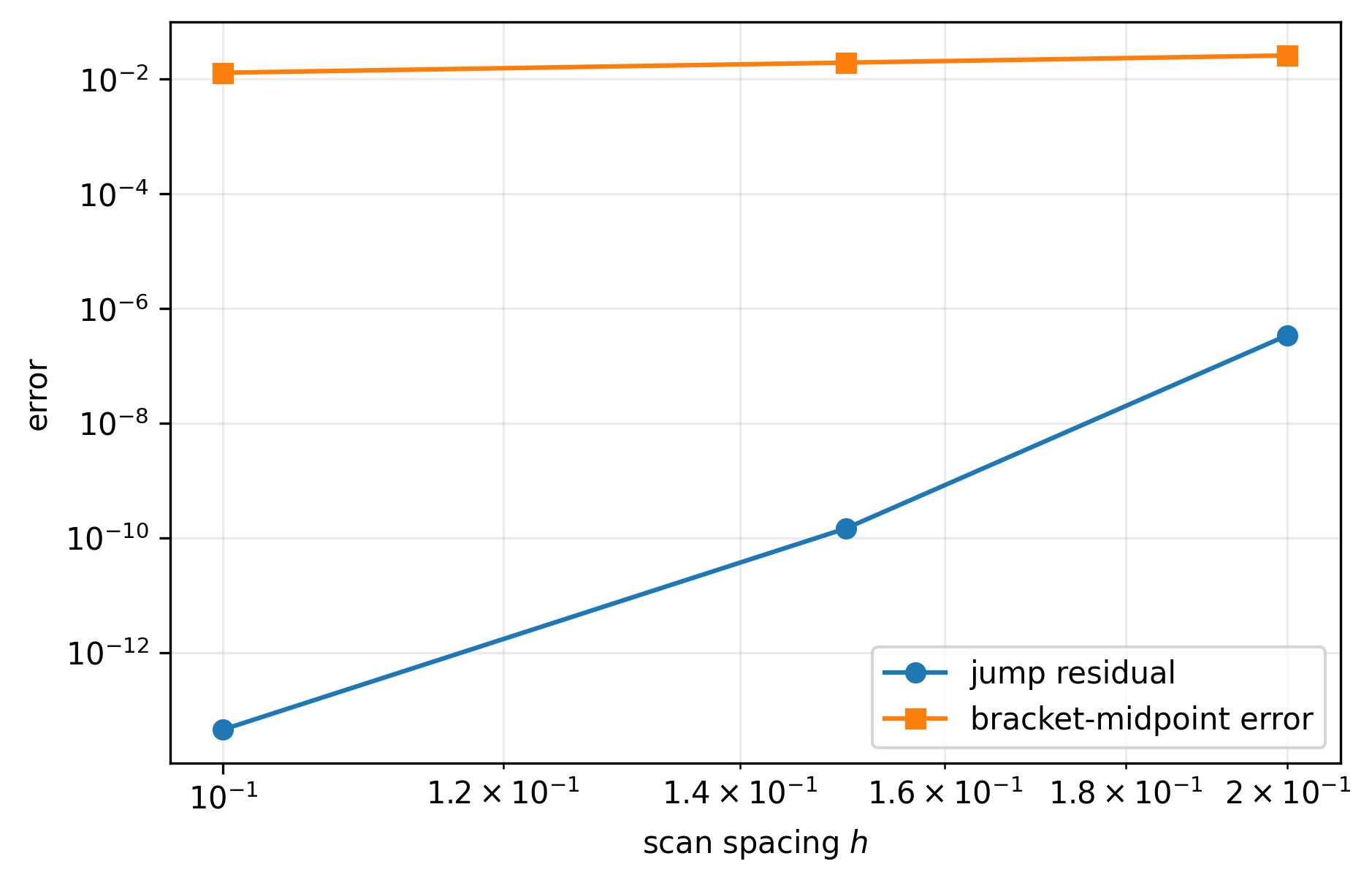}
\caption{Sampled crossing and transported jump.}
\end{subfigure}
\caption{Continuous transport and event recovery for the first
positive zero of $\Xi$.}
\label{fig:dynamic-validation-compact}
\end{figure}

\begin{table}[htbp]
\centering
\caption{Recovery of the first positive zero from a sampled entering
event.}
\label{tab:event-recovery-compact}
\small
\setlength{\tabcolsep}{4pt}
\begin{tabular}{rrcccc}
\toprule
$h$ & $N$ & $E_Q$ &
$\sigma_2(\widehat Q)/\sigma_1(\widehat Q)$ &
$E_n$ & $E_\tau$\\
\midrule
0.20 & 128 & $3.427\times10^{-7}$ &
$8.664\times10^{-17}$ &
$3.427\times10^{-7}$ &
$3.14\times10^{-14}$\\
0.15 & 256 & $1.459\times10^{-10}$ &
$4.520\times10^{-17}$ &
$1.459\times10^{-10}$ &
$3.14\times10^{-14}$\\
0.10 & 512 & $4.542\times10^{-14}$ &
$5.433\times10^{-17}$ &
$4.952\times10^{-14}$ &
$3.14\times10^{-14}$\\
\bottomrule
\end{tabular}
\end{table}

The singular-value ratio remains at the $10^{-16}$ level in all
three runs, confirming the predicted rank-one structure. Although the
midpoint localizes the crossing only to an error proportional to
$h$, the ratio in \eqref{eq:numerical-event-recovery} recovers the
zero to the double-precision level. The jump and multiplicity errors
decrease from approximately $3.4\times10^{-7}$ to
$5.0\times10^{-14}$.

Taken together, the experiments validate the principal computational
features of the framework. The contour matrices are accurately
computed when the integration path remains sufficiently separated from
the zeros; the indicator fields distinguish count, coordinate, and
inertia information; and the moving-contour trajectory decomposes
numerically into a continuous congruence flow and isolated low-rank
events. The main limitation observed in these tests is the deterioration of contour quadrature when a pole of $\Xi'/\Xi$ approaches the integration path.
Here this effect is controlled by coupling the scan spacing with the
quadrature order.

\section{Conclusion}
\label{sec:conclusion}

This paper develops a finite-dimensional matrix representation of the local zero geometry of the Riemann $\Xi$-function by combining contour moments, atomic measures, and Hankel matrices. The resulting framework unifies static and dynamical information: the vanishing and stabilized rank of the Hankel matrices describe the enclosed coordinate support, while their inertia distinguishes real nodes from nonreal conjugate pairs and leads to a local Hankel-positivity formulation of the Riemann hypothesis. When the contour moves, the same matrices satisfy a continuous congruence flow interrupted by finite-rank crossing events; after correction for the known coordinate drift, the trajectory becomes piecewise constant and its jumps encode crossing directions, multiplicities, coordinates, and zero locations. The numerical experiments confirm the predicted flow and jump structures and show that centered normalization is important for stable computation, with near-contour zeros remaining the principal source of quadrature ill-conditioning. The framework should therefore be viewed as a local structural and computational reformulation of zero configurations, rather than as a proof of the Riemann hypothesis. The central unresolved problem is to derive the required Hankel positivity directly from the analytic or arithmetic structure of $\Xi$.

\bibliographystyle{plain}
\bibliography{myref_revised}

\end{document}